\numberwithin{equation}{section}
\font\tengothic=eufm10 scaled\magstep 1
\font\sevengothic=eufm7 scaled\magstep 1
\newcommand{\N}{\mathbb{N}}
\DeclareMathOperator{\pnt}{\raise 0.5mm \hbox{\large\bf.}}
\newtheorem{theorem}{Theorem}[section]
\newtheorem{lemma}[theorem]{Lemma}
\newtheorem{proposition}[theorem]{Proposition}
\newtheorem{corollary}[theorem]{Corollary}
\newtheorem{conjecture}[theorem]{Conjecture}
\theoremstyle{definition}
\newtheorem{definition}[theorem]{Definition} 
\newtheorem{remark}[theorem]{Remark}
\newtheorem{example}[theorem]{Example}
\newtheorem{notation}[theorem]{Notation}
\author[G. Favacchio]{Giuseppe Favacchio}
\address{Dipartimento di Matematica e Informatica, Viale A. Doria, 6 - 95100 - Catania, Italy}
\email{favacchio@dmi.unict.it}
\author[J.~Migliore]{Juan Migliore}
\address{Department of Mathematics, University of Notre Dame, Notre Dame, IN 46556}
\email{migliore.1@nd.edu}
\title[multiprojective spaces and ACM points]{Multiprojective spaces and the arithmetically Cohen-Macaulay property}
\begin{document}
\keywords{points in multiprojective spaces, arithmetically
	Cohen-Macaulay, linkage}
\subjclass[2010]{14M05, 13C14, 13C40, 13H10, 13A15}
\thanks{Version: July 21, 2017}	
	
\begin{abstract} In this paper we study the arithmetically Cohen-Macaulay (ACM) property for sets of points in multiprojective spaces. Most of what is known is for $\mathbb P^1\times \mathbb P^1$  and, more recently, in $(\mathbb P^1)^r.$ In $\mathbb P^1\times \mathbb P^1$ the so called inclusion property characterizes the ACM property. We extend the definition in any multiprojective space and we prove that the inclusion property implies the ACM property in $\mathbb P^m\times \mathbb P^n$. In such an ambient space it is equivalent to the so-called $(\star)$-property.
Moreover, we start an investigation of the ACM property in 	$\mathbb P^1\times \mathbb P^n.$ We give a new construction that highlights how different the behavior of the ACM property is in this setting.

\end{abstract}

\maketitle


\section{Introduction}

Let   $X\subseteq \mathbb P^{a_1} \times \dots \times \mathbb P^{a_n}$ be a finite collection of points. It is interesting to describe the homological invariants of the coordinate ring of $X$. 
In particular, it is a subject of research to understand when $X$ is arithmetically Cohen-Macaulay (ACM), i.e. when the coordinate ring is a Cohen-Macaulay ring. Since it is no longer the case (as it is in projective space) that a finite set of points is automatically ACM, the determination of whether or not the ACM property holds for a finite set draws on a combination of geometric, combinatoric, algebraic and numerical considerations. The Cohen-Macaulay question here is closely related to the Cohen-Macaulay question for unions of linear varieties in projective space, but it is a more manageable version of the problem than the case of arbitrary unions since only certain such unions correspond to finite sets in multiprojective spaces. Indeed, we will frequently use this connection.

A characterization of finite sets of points with the ACM property is only known in $\mathbb P^1 \times \mathbb P^1$ (see \cite{GV-book} for an exhaustive discussion of the topic) and, more recently, in $\mathbb P^{1} \times \mathbb P^{1}\times \dots \times\mathbb P^{1} = (\mathbb P^1)^n$ (see \cite{FGM2017}). The purpose of this paper is to investigate the new subtleties that arise in studying the ACM property in more general multiprojective spaces.

More precisely, given $X$ a finite collection of points in $\mathbb P^{a_1} \times \mathbb P^{a_2}$, one can define the so-called $(\star)$-property (see \cite{GV-book} Definition 3.19 or page \pageref{star prop} of this paper for the definition). It is known that a collection of points $X$ in $\mathbb P^1 \times \mathbb P^{1}$  is ACM if and only if it satisfies the $(\star)$-property (cf. for instance \cite{GV-book} Theorem 4.11). In \cite{FGM2017} a characterization of the ACM property for finite sets was obtained for $(\mathbb P^1)^n$, in terms of what was called the $(\star_n)$-property (see \cite{FGM2017} Definition 3.6 and Theorem 3.13). What is significant about both of these results is that the ACM question is determined by the existence or not of certain kinds of subconfigurations.

Here we prove that in $\mathbb P^{a_1} \times \mathbb P^{a_2}$, if $X$ is a set of points satisfying the $(\star)$-property then $X$ is ACM (Theorem \ref{star implies ACM}). It was already known that the converse does not hold (see for instance \cite{GuVT2008b}), but we give simpler examples. Part of the purpose of section \ref{sec:P1xPn} is to get a better understanding of the fact that the converse is not true (see Remark \ref{subconfigurations}). 

In \cite{FGM2017} the authors introduce, for a set of points in $(\mathbb P^1)^n$, the {\em inclusion property}. 
In section~\ref{sec:inclusion} we generalize the inclusion property to an arbitrary multiprojective space $\mathbb P^{a_1} \times \dots \times \mathbb P^{a_n}$ (see Definition \ref{inclusion prop}). We show that when $n=2$ it is still true that the inclusion property is equivalent to the $(\star)$-property (Lemma \ref{star iff inclusion}), and hence it implies the ACM property for $X$ (Corollary \ref{inclusion implies ACM}). We are also able to show that if any of the $a_i$ is equal to~1 then again the inclusion property implies ACM (Proposition \ref{inclusion-ACM}). We conjecture that this implication holds in general.

To investigate ACM sets of points in $\mathbb P^{a_1} \times \mathbb P^{a_2} \times \dots \times \mathbb P^{a_n}$  one can use extensions, to the multihomogeneous setting, of some standard tools in the homogeneous setting. These include {\em hyperplane sections, basic double G-linkage, liaison addition, and liaison}. This approach was already used in \cite{FGM2017}.
For example, some of our results are for $\mathbb P^1 \times \mathbb P^{a_2} \times \dots \times \mathbb P^{a_n}$. Omitting details here, we observe that the distinguishing feature of this case for us is that in $\mathbb P^1$ a point is also a hyperplane, and this allows us to use hyperplane sections and related constructions in our study. 

This is crucial for instance in the proof of Proposition \ref{inclusion-ACM}, that the inclusion property implies the ACM property in $\mathbb P^1 \times \mathbb P^{a_2} \times \dots \times \mathbb P^{a_n}$. Thus it was surprising to us when we obtained Corollary \ref{inclusion implies ACM}, that the inclusion property implies the ACM property in $\mathbb P^{a_1} \times \mathbb P^{a_2}$, which avoids hyperplane sections but reaches the same conclusion.

In Section \ref{sec:P1xPn} we explore the ACM property for collection of points in $\mathbb P^1\times \mathbb P^n$. Examples \ref{ex:4 pt ACM} and \ref{ex:6pt Not ACM} underline a crucial difference with the $(\mathbb P^1)^n$ case. 
A set $X$ of reduced points of $(\mathbb P^1)^n$ has the ACM property if and only if it does not contain certain  sub-configurations (see \cite{FGM2017} Theorem 3.13). A similar characterization is not possible in $\mathbb P^1\times\mathbb P^n$. Instead, we give a construction of a set of points which, as one continues to add points following the same prescribed rules, fluctuates between being ACM and not being ACM in a predictable way.
Section \ref{sec:P1xPn}  is devoted to a careful study of this construction and what it tells us about the ACM property. 


\section{Preliminaries}\label{sec:preliminaries}

We work over a field of characteristic zero.
Set $S:=k[\mathbb P^n]$. 
Recall that for a finite set of points $Z\subset \mathbb P^n$ the Hilbert function of $Z$ is defined as the numerical function $H_Z: \mathbb N \to \mathbb N$ such that
\[
H_Z(i)=\dim_k(S/I_Z)_i= \dim_k S_i-\dim_k(I_Z)_i.
\]
Since $H_Z(t)=\# Z$ for $t$ large enough, the first difference of the Hilbert function $\Delta H_Z(i):=H_Z(i)-H_Z(i-1)$ is eventually  zero. The {\em $h$-vector} of Z is 
\[
h_Z= h = (1,h_1,\ldots, h_t )
\]
where $h_i  = \Delta H_Z(i)$ and $t$ is the last index such that $\Delta H_Z(i)  > 0$. 

A finite set of points in $\mathbb P^n$ is said to have {\em generic Hilbert function} if $H_Z(i):=\min\left\{{i+n \choose n}, \# Z\right\}$, i.e. $h_Z= (1,{n \choose n-1}, {n+1 \choose n-1}, \cdots, \binom{n+t-2}{n-1}, h_t  )$
where $0<h_t\le {n+t-1 \choose n-1}$.

\begin{definition}
	For $V = \mathbb P^{a_1} \times \dots \times \mathbb P^{a_n}$ we define 
	\[
	\pi_i : V \rightarrow \mathbb P^{a_1} \times \dots \times \widehat{\mathbb P^{a_i}} \times \dots \times \mathbb P^{a_n}
	\] 
	to be the projection omitting the $i$-th component and 
	\[
	\eta_i : V \rightarrow \mathbb P^{a_i}
	\]
	to be the projection to the $i$-th component.
Note that if $V:=\mathbb P^{a_1} \times \mathbb P^{a_2}$ then $\pi_1=\eta_2.$
\end{definition}

Let $\underline{e}_1,\dots, \underline{e}_n$ be the standard basis of $\mathbb N^n$.
Let $x_{i,j}$, with $1 \leq i \leq n$ and $0 \leq j \leq a_i$ for all $i,j$, be the variables for the different $\mathbb P^{a_i}$. Let 
\[
R = k[x_{1,0}, \dots,x_{1,a_1},\dots, x_{n,0}, \dots, x_{n,a_n}],
\]
where the degree of $x_{i,j}$ is $\underline{e}_i$. 

A subscheme $X$ of $V$ is defined by a saturated ideal, $I_X$, generated by a system of multihomogeneous polynomials in $R$ in the obvious way.  We say that $X$ is {\em arithmetically Cohen-Macaulay (ACM)} if $R/I_X$ is a Cohen-Macaulay ring.

Let $N = a_1 + \dots + a_n + n$.
Given a subscheme $X$ of $V$ together with its homogeneous ideal $I_X$, we can also consider the subscheme $\bar X$ of $\mathbb P^{N-1}$ defined by $I_X$. Notice that if $X$ is a zero-dimensional subscheme of $V$, $I_X$  almost never defines a zero-dimensional subscheme of $\mathbb P^{N-1}$.

The following definition also includes facts that can be found in the literature. It is a special case of so-called {\em Basic Double Linkage}. See for instance \cite{MN}~Lemma 3.4 and Corollary 3.5, \cite{GV-book}~Theorem~4.9 and \cite{FGM2017}~Proposition 2.3.

\begin{definition} \label{BDL}
Let $V_1 \subseteq V_2 \subseteq \dots \subseteq V_r \subset \mathbb P^n$ be ACM of the same
dimension  $\geq 1$. Let $H_1, \dots, H_r$ be hypersurfaces, defined by forms $F_1,\dots, F_r$, such that
for each $i$, $H_i$ contains no component of $V_j$ for any $j \leq i$. Let $W_0 \subset V_1$ be a codimension 1 ACM
subscheme, and for each $i \geq 1$ let $W_i$ be the ACM scheme defined by the corresponding
hypersurface sections: $I_{W_i} = I_{V_i} + (F_i)$. Let $Z$ be the sum of the $W_i$, viewed as divisors
on $V_r$. Then

\begin{itemize}
\item[(i)] $Z$ is ACM.

\item[(ii)] As ideals we have
\[
I_Z = I_{V_r} + F_r I_{V_{r-1}} + F_r F_{r-1} I_{V_{r-2}} + \dots + F_r F_{r-1} \dots F_2 I_{V_1} + F_r F_{r-1} \dots F_1 I_{W_0}.
\]
\end{itemize}
\end{definition}


\section{The inclusion property and the $(\star)$-property}\label{sec:inclusion}

The next definition introduces a partition for finite subsets of $\mathbb P^{a_1}\times \mathbb P^{a_2}\times \cdots\times \mathbb P^{a_n}$. Without loss of generality we focus on the projection to the first component, but the definition could just as well be made for any of the projections. See also Theorem 3.21 of~\cite{GV-book}.


\begin{definition} \label{inclusion prop}
Let $X \subset \mathbb P^{a_1}\times \mathbb P^{a_2}\times \cdots\times \mathbb P^{a_n}$ be a finite, reduced subscheme. Let $\eta_1(X) = \{ P_1,\dots, P_t \} \subset \mathbb P^{a_1}$.   
For each $P_j \in \eta_1(X)$ let $X_j = \eta_1^{-1}(P_j) \cap X$.
We call the $X_j$ the {\em level sets} of $X$ with respect to $\eta_1$. We say that $X$ has the {\em inclusion property with respect to $\pi_1$} if  the collection of subsets $\{ \pi_1(X_1), \dots, \pi_1(X_t)  \} $ of $\mathbb P^{a_2}\times \cdots\times \mathbb P^{a_n}$, for $1 \leq j \leq t$, is totally ordered by inclusion and they are all ACM. 
\end{definition}

The next proposition gives a relation between the inclusion property and the  ACM property for finite sets of points when $a_1 = 1$.

\begin{proposition} \label{inclusion-ACM}
	Let $X \subset \mathbb P^1\times \mathbb P^{a_2}\times \cdots\times \mathbb P^{a_n}$ be a finite set. Let $X_1,\dots,X_t $ be the level sets with respect to $\eta_1$, let $Y_i = \pi_1(X_i) \subset \mathbb P^{a_2} \times \dots \times \mathbb P^{a_n}$ for each $i$, and let $L_1, \dots, L_t \in k[x_0,x_1]$ be the linear forms defining the points $\{P_1,\dots,P_t\} \subset \mathbb P^1$ as in Definition \ref{inclusion prop}.
Assume that $X$ has the inclusion property with respect to $\pi_1$. In particular, each $Y_i$ is ACM and we can assume that $Y_1 \supset Y_2 \supset \dots \supset Y_t$.
Then $X$ is ACM. Furthermore, 
\[
I_X =  I_{Y_1} + L_1 I_{Y_2} + L_1 L_2 I_{Y_3} + \dots + L_1 L_2 \dots L_{t-1} I_{Y_t} + (L_1 L_2 \dots L_t).
\]
\end{proposition}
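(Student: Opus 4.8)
The plan is to induct on the number $t$ of level sets, performing a single basic double link (Definition \ref{BDL} with $r=1$) at each step. The geometric engine is the hypothesis $a_1=1$: each point $P_i\in\mathbb P^1$ is a hyperplane, cut out by the single linear form $L_i$, so that the level set $X_i=\{P_i\}\times Y_i$ is nothing but the hypersurface section of $\mathbb P^1\times Y_i$ by $L_i$. This is precisely what allows $X$ to be assembled one level at a time by repeated hypersurface sections, and it is why the product structure of the $Y_i$ survives into the ideal of $X$.

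First I would record two facts. (1) For each $i$ the product $\mathbb P^1\times Y_i$ is ACM: its multihomogeneous coordinate ring is $k[x_0,x_1]\otimes_k\bigl(k[\mathbb P^{a_2}\times\cdots\times\mathbb P^{a_n}]/I_{Y_i}\bigr)$, a tensor product over the field $k$ of the polynomial ring $k[x_0,x_1]$ with the Cohen--Macaulay coordinate ring of $Y_i$, hence Cohen--Macaulay; moreover its Krull dimension is $n+1$, independent of $i$, since each nonempty finite set $Y_i$ contributes dimension $n-1$. (2) Again because $a_1=1$, the saturated ideal of the level set is $I_{X_i}=I_{Y_i}R+(L_i)$, and geometrically $X_i=(\mathbb P^1\times Y_i)\cap\{L_i=0\}$.

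For the base case $t=1$ we have $X=X_1=\{P_1\}\times Y_1$, which is ACM by fact (1) together with fact (2), and $I_X=I_{Y_1}+(L_1)$ as required. For the inductive step, set $X''=X_2\cup\cdots\cup X_t$; its level sets project to $Y_2\supseteq\cdots\supseteq Y_t$, so $X''$ again has the inclusion property with respect to $\pi_1$ and, by the inductive hypothesis, is ACM with $I_{X''}=I_{Y_2}+L_2 I_{Y_3}+\cdots+L_2\cdots L_{t-1}I_{Y_t}+(L_2\cdots L_t)$. I would then apply Definition \ref{BDL} with $r=1$, taking $V_1=\mathbb P^1\times Y_1$, the form $F_1=L_1$, and $W_0=X''$. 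The hypotheses hold: $V_1$ is ACM of dimension $n+1\geq1$ by fact (1); since every $Y_i\subseteq Y_1$ we have $X''\subseteq V_1$, and $X''$ is a codimension-one ACM subscheme of $V_1$; and $L_1$ contains no component of $V_1$ (every component of $\mathbb P^1\times Y_1$ surjects onto $\mathbb P^1$, whereas $L_1$ vanishes only on the slice over $P_1$), so $L_1$ is a nonzerodivisor modulo $I_{V_1}$. Definition \ref{BDL} then yields that $Z:=W_0+W_1$ is ACM with $I_Z=I_{V_1}+L_1 I_{W_0}=I_{Y_1}+L_1 I_{X''}$. Here $W_1=(\mathbb P^1\times Y_1)\cap\{L_1=0\}=X_1$ by fact (2), and since $X''$ lies over $P_2,\dots,P_t$ while $X_1$ lies over $P_1$, the two are disjoint, so $Z=X''\cup X_1=X$. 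Substituting the inductive formula and expanding $I_{Y_1}+L_1 I_{X''}$ gives exactly the asserted expression for $I_X$, and the ACM property is inherited from $Z$.

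The step I expect to demand the most care is the interface between the multigraded picture and the singly-graded one in which Definition \ref{BDL} is phrased: one must verify that forming $\mathbb P^1\times Y_i$ genuinely corresponds to the extension $I_{Y_i}R$ and preserves both Cohen--Macaulayness and dimension, that the hypersurface section of $\mathbb P^1\times Y_1$ by $L_1$ is the \emph{reduced} scheme $X_1$ with saturated ideal $I_{Y_1}R+(L_1)$ (so that no saturation is lost upon passing to $\mathbb P^{N-1}$), and that the ``sum of divisors'' in Definition \ref{BDL} coincides with the reduced union $X''\cup X_1$, which holds precisely because the two pieces are disjoint over $\mathbb P^1$. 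Once fact (1) is established, checking that all schemes entering the induction carry the common dimension required by Definition \ref{BDL} is routine.
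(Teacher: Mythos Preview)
Your proof is correct and takes essentially the same approach as the paper: both deduce the result from the Basic Double Linkage machinery of Definition~\ref{BDL}, exploiting exactly the fact you isolate---that since $a_1=1$, each level set $X_i$ is the hyperplane section of the ACM variety $\mathbb P^1\times Y_i$ by $L_i$. The only cosmetic difference is that the paper applies Definition~\ref{BDL} once with the full chain $V_1\subseteq\cdots\subseteq V_{t-1}$ (taking $V_j=\mathbb P^1\times Y_{t-j}$, $F_j=L_{t-j}$, and $W_0=X_t$ with $I_{W_0}=(I_{Y_t},L_t)$), whereas you unwind this into an explicit induction on $t$ using the $r=1$ case at each step; these are equivalent presentations of the same construction.
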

\begin{proof}
It follows from Definition \ref{BDL}, viewing this in $\mathbb P^{a_2 + \dots + a_n + n}$ -- see the proof of Proposition 2.6 in \cite{FGM2017}. Note that here we use $I_{W_0} = (I_{Y_t},L_t)$. What is important about $\mathbb P^1$ is that in that case the level sets are hyperplane sections of ACM varieties, because points in $\mathbb P^1$ are hyperplanes.
\end{proof}

If the ambient space only consists of a product of two projective spaces, $\mathbb P^{a_1}\times \mathbb P^{a_2},$ we now define the so-called {\em $(\star)$-property} (or \textit{star} property), following \cite{GV-book} Definition 3.19. 

\begin{definition}\label{star prop}
	A finite set $X\subset \mathbb P^{a_1}\times \mathbb P^{a_2}$ has the $(\star)$-property if and only if for any $(P_1,Q_1), (P_2,Q_2)\in X \subseteq \mathbb P^{a_1}\times \mathbb P^{a_2}$ then also either $(P_1,Q_2)$ or $ (P_2,Q_1)\in X$. 
\end{definition}

The inclusion property and the $(\star)$-property agree in $\mathbb P^{a_1}\times \mathbb P^{a_2}$. This fact is known; it was shown using a different notation for sets of points in $\mathbb P^1 \times \mathbb P^1 $ (see for instance Theorem 3.21. in \cite{GV-book}). For completeness we include a proof.

\begin{lemma} \label{star iff inclusion}
If $X \subset \mathbb P^{a_1}\times \mathbb P^{a_2}$ is a finite set, then $X$ satisfies the inclusion property if and only if it satisfies the $(\star)$-property.
\end{lemma}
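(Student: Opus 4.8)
The plan is to unwind both definitions and observe that in the two-factor case they match almost tautologically, the only genuine simplification being that the ACM requirement built into the inclusion property is vacuous here. Throughout I would write $\eta_1(X) = \{P_1, \dots, P_t\} \subset \mathbb P^{a_1}$, let $X_j = \eta_1^{-1}(P_j) \cap X$ be the level sets, and set $Y_j := \pi_1(X_j) \subset \mathbb P^{a_2}$ (recall that for $n=2$ one has $\pi_1 = \eta_2$). The first observation I would record is that each $Y_j$ is a finite set of points in a \emph{single} projective space $\mathbb P^{a_2}$, and such a set is automatically ACM. Hence the inclusion property for $X$ collapses to the single requirement that the sets $Y_1, \dots, Y_t$ be totally ordered by inclusion, and the lemma reduces to showing that this total-ordering condition is equivalent to the $(\star)$-property.

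For the implication ($(\star)$ $\Rightarrow$ inclusion) I would argue by contrapositive. Suppose the $Y_j$ are not totally ordered by inclusion, so there are indices $i \neq j$ with $Y_i \not\subseteq Y_j$ and $Y_j \not\subseteq Y_i$. Choose witnesses $Q_1 \in Y_i \setminus Y_j$ and $Q_2 \in Y_j \setminus Y_i$. By definition of the level sets we have $(P_i, Q_1) \in X$ and $(P_j, Q_2) \in X$, while $(P_i, Q_2) \notin X$ (as $Q_2 \notin Y_i$) and $(P_j, Q_1) \notin X$ (as $Q_1 \notin Y_j$). This exhibits a pair of points of $X$ violating the $(\star)$-property, which is the desired contradiction.

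For the converse (inclusion $\Rightarrow$ $(\star)$), I would take any two points $(P, Q), (P', Q') \in X$. If $P = P'$ then $(P, Q') = (P', Q') \in X$, and if $Q = Q'$ then $(P, Q') = (P, Q) \in X$; in either degenerate case $(\star)$ holds immediately. Otherwise $P, P'$ are distinct elements of $\eta_1(X)$, say $P = P_i$ and $P' = P_j$, so that $Q \in Y_i$ and $Q' \in Y_j$. By hypothesis $Y_i$ and $Y_j$ are comparable: if $Y_i \subseteq Y_j$ then $Q \in Y_j$, whence $(P_j, Q) = (P', Q) \in X$; if instead $Y_j \subseteq Y_i$ then $Q' \in Y_i$, whence $(P_i, Q') = (P, Q') \in X$. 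Either way the $(\star)$-property is verified.

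There is no serious obstacle in this argument; it is a direct translation between the two combinatorial conditions. The two points that require a moment's care, and which I would be sure to flag explicitly, are (i) that the ACM clause of the inclusion property carries no information when $n = 2$, since the $Y_j$ live in a single $\mathbb P^{a_2}$, and (ii) the degenerate cases in the converse where the two chosen points already share a first or second coordinate, which must be dispatched separately before invoking the total ordering.
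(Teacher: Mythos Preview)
Your proof is correct and follows essentially the same approach as the paper: both note that the ACM clause is vacuous in the two-factor case, both prove one direction by the contrapositive witness argument (picking elements from two incomparable level-set projections), and the paper dispatches the other direction with ``it is clear,'' which your argument simply spells out in full, including the degenerate shared-coordinate cases.
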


\begin{proof}
Assume that $X$ satisfies the inclusion property. Notice that in this case the ACM condition for the inclusion property is trivial. Then we can label the elements of $\eta_1(X)$ so that there is a sequence of points $P_1,P_2,\dots,P_s \in \mathbb P^{a_1}$ with
\[
X  =  X_1 \cup \dots \cup X_s 
\]
being the level set decomposition, and
\[
\begin{array}{rcl}
X_1 & = & \{ (P_1,Q_{1,1}),\dots, (P_1,Q_{1,n_1}) \} \\ 
& \vdots \\
X_s & = & \{ (P_s,Q_{s,1}), \dots, (P_s,Q_{s,n_s}) \}
\end{array}
\]
and furthermore
\begin{equation} \label{inclusion in pf}
\mathbb P^{a_2} \supset \{ Q_{1,1},\dots,Q_{1,n_1} \} \supseteq \dots \supseteq \{Q_{s,1},\dots,Q_{s,n_s} \}.
\end{equation}
Then it is clear that $X$ satisfies the $(\star)$-property. 

Conversely, assume that $X$ satisfies the $(\star)$-property and suppose that it does not have the inclusion property. Then $X$ is decomposed into level sets as above, but the inclusions (\ref{inclusion in pf}) do not all hold. Without loss of generality, suppose that $A_1 = \{ Q_{1,1},\dots,Q_{1,n_1} \}$ and $A_2 = \{ Q_{2,1},\dots,Q_{2,n_2} \}$ are incomparable with respect to inclusion. Specifically, suppose $Q_{1,1} \notin A_2$ and $Q_{2,1} \notin A_1$. Then $(P_1,Q_{1,1})$ and $(P_2,Q_{2,1})$ violate the $(\star)$-property.
\end{proof}

A set of points with the $(\star)$-property (equivalently the inclusion property) can also be organized  as ``rectangles" in the following way. For convenience we now start indexing with 0 rather than 1. If $X\subset \mathbb P^{a_1}\times \mathbb P^{a_2}$ has the $(\star)$-property then, after renaming, we can always assume that there exists a set 
\[
U(X):=\{(i_1,j_1), (i_2,j_2),\ldots (i_t,j_t)\}\subseteq \N^2
\]
 where $i_1> \cdots > i_t$ and $j_1< \cdots < j_t$, such that 
\[
X=\{P_i\times Q_j\ | \ (0,0)\le  (i,j)\le (i_k,j_k)\ \text{for some}\ (i_k,j_k)\in U(X) \}.
\]

\bigskip
\bigskip

\begin{figure}\label{figure1}
\begin{picture}(160,160)(10,10)
\thinlines
\put (10,10){\vector(0,1){170}}
\put (10,10){\vector(1,0){170}}
\put (10,160){\line(1,0){50}}
\put (60,10){\line(0,1){150}}
\put (160,10){\line(0,1){40}}
\put (10,50){\line(1,0){150}}
\put (10,140){\line(1,0){70}}
\put (80,10){\line(0,1){130}}
\put (120,10) {\line(0,1){105}}
\put (10,115) {\line(1,0){110}}
\put (135,10) {\line(0,1){80}}
\put (10,90) {\line(1,0){125}}

\put (63,165){$(i_5,j_5)$}
\put (84,143){$(i_4,j_4)$}
\put (122,120){$(i_3,j_3)$}
\put (138,94){$(i_2,j_2)$}
\put (162,56){$(i_1,j_1)$}

\put (57,157){$\bullet$}
\put (77,137){$\bullet$}
\put (117,112){$\bullet$}
\put (132,87){$\bullet$}
\put (157,47){$\bullet$}

\end{picture}
\caption{Example of $U(X)$ configuration.}
\end{figure}
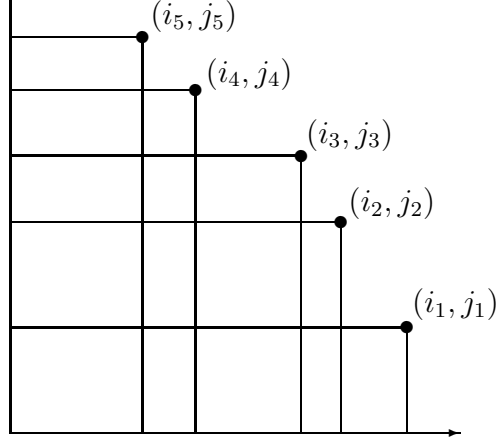

\bigskip  \bigskip

\noindent Moreover, in this case, we set 
\[
V_k:=\{P_i\in \pi_2(X) \ |\  i\le i_k\}\subseteq \mathbb P^{a_1}\ \text{and}\ Z_k:=\{Q_j \in \pi_1(X) \ |\ j\le j_k\}\subseteq \mathbb P^{a_2}
\]
 for $h=1,\ldots, t,$ where $t$ is, as above, the number of elements in $U(X)$.
 
 We first consider the case of just one ``rectangle."

\begin{lemma}\label{rectangular case}
Let $X\subset \mathbb P^{a_1} \times \mathbb P^{a_2}$ be a finite set of points. Assume that $X$ has the $(\star)$-property and $U(X)=\{(i_1,j_1)\}$.
Then 
\begin{enumerate}
	\item $I_X=I_{V_{1}}+I_{Z_{1}}$;
	\item $X$ is ACM.
\end{enumerate}
\begin{proof}
It is trivial to check that 
\[
I_X=\bigcap_{(r,s)\le (i_1,j_1) } (I_{P_r}+I_{Q_s}) = \bigcap_{r=0}^{i_1}  (I_{P_r}+ I_{Z_1}) = I_{V_{1}}+I_{Z_{1}}.
\]
Moreover, note that $V_1$ and $Z_1$ are both ACM, and  
\[
R/I_X \cong k[x_0,\dots,x_{a_1}]/I_{V_1} \otimes_k k[y_0,\dots,y_{a_2}]/I_{Z_1}
\]
so $X$ is ACM.
\end{proof}
\end{lemma}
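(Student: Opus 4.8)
The plan is to prove (1) by writing $I_X$ as an intersection of point ideals and pushing the intersection through the separation of the two groups of variables, and then to deduce (2) immediately from the resulting tensor-product description of the coordinate ring.

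First I would record the structural meaning of the hypothesis $U(X)=\{(i_1,j_1)\}$. By the rectangle description preceding the statement, $X=\{P_i\times Q_j \mid 0\le i\le i_1,\ 0\le j\le j_1\}$ is the full grid on $V_1=\{P_0,\dots,P_{i_1}\}\subseteq\mathbb{P}^{a_1}$ and $Z_1=\{Q_0,\dots,Q_{j_1}\}\subseteq\mathbb{P}^{a_2}$. Since $X$ is reduced, its defining ideal is the intersection of the prime ideals of its points, and the prime ideal of a single point $(P_r,Q_s)$ of the product is $I_{P_r}+I_{Q_s}$, where $I_{P_r}$ is generated by linear forms in $x_0,\dots,x_{a_1}$ and $I_{Q_s}$ by linear forms in $y_0,\dots,y_{a_2}$. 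Thus $I_X=\bigcap_{r,s}(I_{P_r}+I_{Q_s})$, and this intersection is automatically saturated.

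The key step is a distributive law for these ideals, and here the separation of the variables is essential. Because $R=k[x_0,\dots,x_{a_1}]\otimes_k k[y_0,\dots,y_{a_2}]$ and $I_{P_r}$ involves only the first group of variables, the quotient $R/I_{P_r}\cong (k[x]/I_{P_r})\otimes_k k[y]$ is obtained from $k[y]$ by a flat base change, so forming this quotient commutes with the finite intersection of the $y$-ideals $I_{Q_s}$. This gives
\[
\bigcap_{s=0}^{j_1}(I_{P_r}+I_{Q_s}) = I_{P_r}+\bigcap_{s=0}^{j_1} I_{Q_s} = I_{P_r}+I_{Z_1}.
\]
Applying the symmetric argument to the outer intersection over $r$, now with the fixed $y$-ideal $I_{Z_1}$, yields $I_X=\bigcap_{r=0}^{i_1}(I_{P_r}+I_{Z_1})=I_{V_1}+I_{Z_1}$, which is statement (1).

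For (2), the identity $I_X=I_{V_1}+I_{Z_1}$ with the two summands supported on disjoint variable sets gives the isomorphism $R/I_X\cong (k[x_0,\dots,x_{a_1}]/I_{V_1})\otimes_k (k[y_0,\dots,y_{a_2}]/I_{Z_1})$. Finite sets of points in projective space are ACM, so both tensor factors are Cohen-Macaulay graded $k$-algebras; since the tensor product of two Cohen-Macaulay $k$-algebras over a field is again Cohen-Macaulay (dimension and depth being additive), $R/I_X$ is Cohen-Macaulay and $X$ is ACM. I expect the main obstacle to be the rigorous justification of the distributive law: one must verify that separating the variables genuinely lets the intersection pass through the quotient. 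It is precisely this separation, available for a single rectangle, that breaks down for more complicated $(\star)$-configurations, which is why the general case later requires the basic double linkage machinery of Definition~\ref{BDL} rather than this direct argument.
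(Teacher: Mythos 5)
Your proposal is correct and follows the paper's own argument exactly: the paper also computes $I_X$ as the intersection of the point ideals, collapses it to $I_{V_1}+I_{Z_1}$ (calling this step ``trivial to check''), and concludes via the tensor-product decomposition $R/I_X \cong k[x]/I_{V_1} \otimes_k k[y]/I_{Z_1}$. Your flat-base-change justification of the distributive law is a sound way of filling in the detail the paper omits.
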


\begin{theorem}\label{thm:decomposition}
Let $X\subset \mathbb P^{a_1} \times \mathbb P^{a_2}$ be a finite set of points. Assume that $X$ has the $(\star)$-property and $U(X)=\{(i_1,j_1), (i_2,j_2),\ldots (i_t,j_t)\}$.
Then 
\[
I_X=I_{V_{1}}+I_{V_{{2}}}I_{Z_{1}}+\cdots+I_{V_{{t}}}I_{Z_{{t-1}}}+I_{Z_{t}}.
\]
\end{theorem}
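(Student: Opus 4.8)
The plan is to realize $I_X$ as an intersection of the ideals of the individual rectangles and then to unwind that intersection using the bigraded tensor structure of $R$. First I would record the combinatorial set-up. Writing $R_k = V_k \times Z_k$ for the $k$-th rectangle, the description of $X$ via $U(X)$ says precisely that $X = R_1 \cup \dots \cup R_t$ as reduced sets of points. Since each $R_k$ is a single rectangle, Lemma \ref{rectangular case} gives $I_{R_k} = I_{V_k} + I_{Z_k}$, and because the ideal of a reduced union is the intersection of the ideals of its pieces,
\[
I_X = \bigcap_{k=1}^{t} \bigl( I_{V_k} + I_{Z_k} \bigr).
\]
The chains $V_1 \supseteq \dots \supseteq V_t$ and $Z_1 \subseteq \dots \subseteq Z_t$ translate into the ideal nestings $I_{V_1} \subseteq \dots \subseteq I_{V_t}$ and $I_{Z_1} \supseteq \dots \supseteq I_{Z_t}$, which I will use throughout. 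Denote by $J$ the claimed right-hand side $I_{V_1} + I_{V_2} I_{Z_1} + \dots + I_{V_t} I_{Z_{t-1}} + I_{Z_t}$.

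The inclusion $J \subseteq I_X$ is the easy half: one checks that each summand of $J$ lies in every factor $I_{V_m} + I_{Z_m}$. For $I_{V_1}$ this is clear since $I_{V_1} \subseteq I_{V_m}$; for $I_{Z_t}$ since $I_{Z_t} \subseteq I_{Z_m}$; and for a middle term $I_{V_k} I_{Z_{k-1}}$ one splits into the cases $m \geq k$ (where $I_{V_k} \subseteq I_{V_m}$) and $m \leq k-1$ (where $I_{Z_{k-1}} \subseteq I_{Z_m}$), landing in $I_{V_m} + I_{Z_m}$ either way.

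The reverse inclusion $I_X \subseteq J$ is the crux, and here I would exploit that $R = A \otimes_k B$ with $A = k[x_0,\dots,x_{a_1}]$ and $B = k[y_0,\dots,y_{a_2}]$, and that $I_{V_k}$ (resp.\ $I_{Z_k}$) is extended from an ideal $\mathfrak a_k \subseteq A$ (resp.\ $\mathfrak b_k \subseteq B$). Since both sides of the asserted identity are multigraded, it suffices to compare them in each bidegree $(p,q)$, where $(I_{V_k})_{(p,q)} = (\mathfrak a_k)_p \otimes B_q$, $(I_{Z_k})_{(p,q)} = A_p \otimes (\mathfrak b_k)_q$, and $(I_{V_k} I_{Z_{k-1}})_{(p,q)} = (\mathfrak a_k)_p \otimes (\mathfrak b_{k-1})_q$. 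Writing $E := A_p$, $G := B_q$, $E_k := (\mathfrak a_k)_p$ and $G_k := (\mathfrak b_k)_q$, the whole statement reduces to a fact of linear algebra: given flags $E_1 \subseteq \dots \subseteq E_t \subseteq E$ and $G \supseteq G_1 \supseteq \dots \supseteq G_t$ of finite-dimensional spaces,
\[
\bigcap_{k=1}^{t} \bigl( E_k \otimes G + E \otimes G_k \bigr) = E_1 \otimes G + \sum_{k=2}^{t} E_k \otimes G_{k-1} + E \otimes G_t .
\]

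To finish I would prove this vector-space identity by choosing a basis $e_1, e_2, \dots$ of $E$ adapted to $(E_k)$ and a basis $f_1, f_2, \dots$ of $G$ adapted to $(G_k)$, so that both sides are spanned by subsets of the product basis $\{ e_a \otimes f_b \}$. Writing $E_k = \langle e_1, \dots, e_{d_k} \rangle$ with $d_1 \leq \dots \leq d_t$ and $G_k = \langle f_1, \dots, f_{c_k} \rangle$ with $c_1 \geq \dots \geq c_t$, membership of $e_a \otimes f_b$ in the left side becomes the condition ``for every $k$, $a \leq d_k$ or $b \leq c_k$,'' while membership in the right side becomes ``$a \leq d_1$, or $b \leq c_t$, or $a \leq d_k$ and $b \leq c_{k-1}$ for some $2 \leq k \leq t$.'' A short monotonicity argument, tracking the largest index $k$ with $a > d_k$ against the smallest index $k$ with $b > c_k$, shows these two conditions cut out the same set of pairs $(a,b)$, which gives the identity. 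The main obstacle is exactly this reverse inclusion; the tensor reduction is what makes it tractable, turning an intersection-of-sums-of-ideals identity into a transparent combinatorial statement about two opposing flags.
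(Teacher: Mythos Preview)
Your argument is correct, and it is genuinely different from the paper's proof. The paper establishes the inclusion $I_X \subseteq J$ by induction on $t = |U(X)|$: it peels off the first rectangle, writes $I_X = (I_{V_1\setminus V_2}+I_{Z_1}) \cap I_{Y_1}$ with $Y_1$ the remaining staircase, and then proves the key identification $(I_{V_1\setminus V_2}+I_{Z_1}) \cap I_{V_2} = I_{V_1} + I_{V_2} I_{Z_1}$ via a saturation argument that invokes the exact sequence of ideal sheaves and the vanishing of $H^1_*$ for ACM schemes. You instead take the hard inclusion apart bidegree by bidegree, using that $R = A \otimes_k B$ and that each $I_{V_k}$, $I_{Z_k}$ is extended from one tensor factor; the problem then becomes the flag identity $\bigcap_k (E_k \otimes G + E \otimes G_k) = E_1 \otimes G + \sum_{k\geq 2} E_k \otimes G_{k-1} + E \otimes G_t$, which is verified by choosing bases adapted to the two opposing flags and comparing the supporting sets of pairs $(a,b)$. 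Your route is more elementary and requires no cohomology, no saturation, and no induction; the paper's route, while heavier here, develops exactly the sheaf-theoretic machinery that is reused in the proof of the next theorem (that the $(\star)$-property implies ACM), so it fits the flow of the section better.
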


\begin{proof}
First we claim that
\[
I_X\supseteq I_{V_{1}}+I_{V_{{2}}}I_{Z_{1}}+\cdots+I_{V_{{t}}}I_{Z_{{t-1}}}+I_{Z_{t}}.
\]
We show that each summand is contained in $I_X$ (see the Figure \ref{figure1}). First, clearly we have $I_{V_{1}},I_{Z_{t}}\subseteq I_X$. Moreover, let $F=G\cdot G'\in I_{V_{{r}}}I_{Z_{r-1}}$ and $P_i\times Q_j\in X.$ If $i\le i_r$ then $P_i\in V_r$ and $G$ vanishes at $P_i$. Else we have $i> i_r$ and $(i,j)\le (i_s,j_s)$ for some $(i_s,j_s)\in U(X)$ where $i_s\ge i> i_r$. Thus, we get $j\le j_s< j_r$ i.e. $j\le j_s\le j_{r-1}$ and $Q_j\in Z_{r-1}.$   

To prove the other inclusion we proceed by induction on $|U(X)|.$ 
The base of the induction follows from Lemma \ref{rectangular case}. Assume now $|U(X)|>1$. We introduce the following partition on $X$:
\[
X=Y_0 \cup Y_1
\]
where
\[
Y_0:=\{P_i\times Q_j \ | \  P_i\in  V_{1}\setminus V_{2}, Q_j \in Z_{1} \} 
\hbox{ and } 
Y_1 =X\setminus Y_0.
\]
Note that $Y_0$ and $Y_1$ have the $(\star)$-property and $U(Y_1)=U(X)\setminus \{(i_1,j_1)\}.$
Set 
\[
J:= I_{V_{1}}+I_{V_{{2}}}I_{Z_{1}}+\cdots+I_{V_{{t}}}I_{Z_{{t-1}}}+I_{Z_{t}}.
\]
We want to show that $I_X \subseteq J$.

We first claim that
\begin{equation} \label{identification}
 (I_{V_{1} \setminus V_{2}}+ I_{Z_1}) \cap I_{V_{2}} = I_{V_1} + I_{V_2} I_{Z_1}.
\end{equation}
Indeed, it is clear that both ideals define the same scheme, and from Lemma \ref{rectangular case} we see that the left-hand side is saturated. We just have to prove that $I_{V_1} + I_{V_2} I_{Z_1}$ is also saturated. Consider the exact sequence
\begin{equation} \label{SES1}
0  \rightarrow I_{V_1} \cap I_{V_2} I_{Z_1} \rightarrow I_{V_1} \oplus I_{V_2} I_{Z_1} \rightarrow I_{V_1} + I_{V_2} I_{Z_1} \rightarrow 0.
\end{equation}
By Lemma \ref{rectangular case},  $I_{V_2} I_{Z_1}$ is a saturated ideal. Hence $I_{V_1} \cap I_{V_2} I_{Z_1} = I_{V_1} \cap I_{Z_1}$. From the exact sequence
\[
0 \rightarrow I_{V_1} \cap I_{Z_1} \rightarrow I_{V_1} \oplus I_{Z_1} \rightarrow I_{V_1} + I_{Z_1} \rightarrow 0
\]
and Lemma \ref{rectangular case}, by viewing the ideals in $R$ and the schemes in $\mathbb P^{a_1+a_2+1}$, sheafifying and taking cohomology we see that $H^1_*(\mathcal I_{V_1} \cap I_{Z_1}) = 0$ (even though the scheme is not unmixed if $a_1 \neq a_2$) since $V_1$ and $Z_1$ are ACM. Putting it together with (\ref{SES1}), cohomology gives that 
$I_{V_1} + I_{V_2} I_{Z_1}$ is saturated, as desired, proving our claim of (\ref{identification}).

By induction and by Lemma \ref{rectangular case} we have
\[
I_X=(I_{V_{1}\setminus V_{2}}+I_{Z_{1}})\cap (I_{V_{{2}}}+ I_{V_{{3}}}I_{Z_{{2}}}+\cdots+I_{V_{{t}}}I_{Z_{{t-1}}}+I_{Z_{t}}).
\]
Let $F \in I_X$.  In particular we have $F \in I_{V_{{2}}}+ I_{V_{{3}}}I_{Z_{{2}}}+\cdots+I_{V_{{t}}}I_{Z_{{t-1}}}+I_{Z_{t}}$, so 
$F = F' + H$ where $F' \in I_{V_2}$ and $H \in I_{V_{{3}}}I_{Z_{{2}}}+\cdots+I_{V_{{t}}}I_{Z_{{t-1}}}+I_{Z_{t}} \subseteq J \subseteq I_X$. Since both $F$ and $H$ are in $I_X$, it follows that $F' \in I_X$. Hence using (\ref{identification}) we obtain  
\[
F' \in I_X \cap I_{V_2} = (I_{V_{1} \setminus V_{2}}+ I_{Z_1}) \cap I_{V_{2}} = I_{V_1} + I_{V_2} I_{Z_1} \subset J.
\]
Since $H \in J$ and $F' \in J$, we have $F \in J$ and we are finished.
\end{proof}

\begin{theorem}\label{star implies ACM}
If $X\subset \mathbb P^{a_1} \times \mathbb P^{a_2}$ is a finite set of points with the $(\star)$-property, then $X$ is ACM.
\end{theorem}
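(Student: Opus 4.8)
My plan is to prove the statement by induction on $t=|U(X)|$, reusing the partition $X=Y_0\cup Y_1$ from the proof of Theorem~\ref{thm:decomposition} together with a Mayer--Vietoris sequence and a depth count; this deliberately avoids hyperplane sections, matching the remark in the introduction. For $t=1$ the set $X$ is a single rectangle and Lemma~\ref{rectangular case} gives the conclusion directly. For $t>1$, recall that $Y_0=(V_1\setminus V_2)\times Z_1$ and $Y_1=X\setminus Y_0$, where $U(Y_1)=U(X)\setminus\{(i_1,j_1)\}$. By Lemma~\ref{rectangular case} the rectangle $Y_0$ is ACM, and by the inductive hypothesis $Y_1$ is ACM. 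Since $X=Y_0\sqcup Y_1$ is a disjoint union of reduced points we have $I_X=I_{Y_0}\cap I_{Y_1}$, so the whole problem is reduced to controlling the ring $R/(I_{Y_0}+I_{Y_1})$.

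Next I would write the Mayer--Vietoris sequence
\[
0\to R/I_X\to R/I_{Y_0}\oplus R/I_{Y_1}\to R/(I_{Y_0}+I_{Y_1})\to 0 .
\]
All three of $R/I_X$, $R/I_{Y_0}$, $R/I_{Y_1}$ have Krull dimension $2$ (the value for a nonempty finite set of points in $\mathbb P^{a_1}\times\mathbb P^{a_2}$, as the associated schemes in $\mathbb P^{a_1+a_2+1}$ are unions of lines), and since $Y_0,Y_1$ are ACM the middle term has depth $2$. The depth lemma for short exact sequences then gives
\[
\depth R/I_X\ \ge\ \min\{\,\depth(R/I_{Y_0}\oplus R/I_{Y_1}),\ \depth R/(I_{Y_0}+I_{Y_1})+1\,\},
\]
so it suffices to prove $\depth R/(I_{Y_0}+I_{Y_1})\ge 1$; together with $\depth R/I_X\le\dim R/I_X=2$ this forces $\depth R/I_X=2$, i.e.\ $X$ is ACM.

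The heart of the argument, and the step I expect to be the main obstacle, is the computation of $R/(I_{Y_0}+I_{Y_1})$. Using $I_{Y_0}=I_{V_1\setminus V_2}+I_{Z_1}$ (Lemma~\ref{rectangular case}) and the explicit shape of $I_{Y_1}$ coming from Theorem~\ref{thm:decomposition}, I would first check that every summand of $I_{Y_1}$ other than its first term $I_{V_2}$ lies in $I_{Z_1}$: indeed $Z_1\subseteq Z_{k-1}$ forces $I_{V_k}I_{Z_{k-1}}\subseteq I_{Z_{k-1}}\subseteq I_{Z_1}$ for $k\ge 3$, and likewise $I_{Z_t}\subseteq I_{Z_1}$. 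Hence
\[
I_{Y_0}+I_{Y_1}=I_{V_1\setminus V_2}+I_{V_2}+I_{Z_1}.
\]
The two halves of this ideal involve disjoint sets of variables, so $R/(I_{Y_0}+I_{Y_1})\cong A\otimes_k B$ with $A=k[x_{1,0},\dots,x_{1,a_1}]/(I_{V_1\setminus V_2}+I_{V_2})$ and $B=k[x_{2,0},\dots,x_{2,a_2}]/I_{Z_1}$. Here $B$ is the coordinate ring of the points $Z_1\subset\mathbb P^{a_2}$, hence Cohen--Macaulay of depth $1$, while $A$ is Artinian of depth $0$, since the disjoint point sets $V_1\setminus V_2$ and $V_2$ (both nonempty, as $i_1>i_2\ge 0$) make $I_{V_1\setminus V_2}+I_{V_2}$ primary to the irrelevant ideal of the first factor. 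The genuinely subtle point is that $I_{Y_0}+I_{Y_1}$ is \emph{neither saturated nor reduced} in the first factor, so I cannot simply say it cuts out a finite set of points; instead I would compute the depth directly. A general linear form $\ell$ in $x_{2,0},\dots,x_{2,a_2}$ is a nonzerodivisor on $B$, hence by flatness of $A$ over $k$ a nonzerodivisor on $A\otimes_k B$, and $(A\otimes_k B)/\ell\cong A\otimes_k(B/\ell B)$ is a tensor product of two Artinian $k$-algebras, therefore Artinian of depth $0$. Thus $\depth R/(I_{Y_0}+I_{Y_1})=1$, which is exactly what the depth chase requires, closing the induction.
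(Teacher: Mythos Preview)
Your proof is correct and follows essentially the same route as the paper's: the same induction on $|U(X)|$, the same partition $X=Y_0\cup Y_1$ with $Y_0=(V_1\setminus V_2)\times Z_1$, the same use of Theorem~\ref{thm:decomposition} to simplify $I_{Y_0}+I_{Y_1}$ to $I_{V_1\setminus V_2}+I_{V_2}+I_{Z_1}$, and the same tensor-product observation that this quotient is Cohen--Macaulay of depth~$1$. The only cosmetic difference is that the paper sheafifies the ideal version of the sequence and concludes $H^1_*(\mathcal I_X)=0$, whereas you run the depth lemma on the quotient Mayer--Vietoris sequence; these are equivalent ways of reading off the same vanishing.
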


\begin{proof}
We proceed by induction on $|U(X)|.$ If $U(X)=\{(i_1,j_1)\}$ the statement follows by Lemma \ref{rectangular case}.
Now let  $|U(X)|>1$. We can decompose $X$ as follows:
\[
X=\big((V_{1}\setminus V_{2})\times Z_{1}\big) \cup Y
\]
with $U(Y)=U(X)\setminus \{(i_1,j_1)\}.$
From this partition for $X$ we obtain the following short exact sequence:
\begin{equation} \label{SES2}
0 \to I_X \to I_{(V_{1}\setminus V_{2}) \times Z_{1}} \oplus I_{Y} \to I_{(V_{1}\setminus V_{2}) \times Z_{1}} + I_{Y}\to 0
\end{equation}
where by induction $(V_{1}\setminus V_{2})\times Z_{1}$ and $Y$ are both ACM. As subschemes of $\mathbb P^{a_1 + a_2 + 1}$ they are reduced unions of lines, and so in particular the first cohomology of their ideal sheaves vanish (see for instance \cite{J book} Lemma 1.2.3).

Moreover, by Theorem \ref{thm:decomposition} and induction we get 
\[
I_{(V_{1}\setminus V_{2})\times Z_{1}}+ I_{Y} = 
I_{V_{1} \setminus V_{2} } + I_{Z_{1} }+ I_{V_{{2}}} + I_{V_{{3}}} I_{Z_{2}} + \cdots  + I_{V_{{t}}}I_{Z_{{t-1}}}+I_{Z_{t}}.
\]
Since $Z_1\subseteq Z_j$ for any $j\ge 1$, the right-hand side  simplifies and we obtain
\[
I_{(V_{1}\setminus V_{2})\times Z_{1}}+ I_{Y}  = I_{V_{1}\setminus V_{2} }+ I_{V_{{2}}}+ I_{Z_{1}}.
\]
Now $k[\mathbb P^{a_1}]/(I_{(V_1 \setminus V_2)} + I_{V_2})$ is artinian (in particular Cohen-Macaulay) and $k[\mathbb P^{a_2}]/I_{Z_1}$ is Cohen-Macaulay of depth 1. Thus 
\[
R/ (I_{V_1 \setminus V_2 } + I_{V_2} + I_{Z_1}) \cong k[\mathbb P^{a_1}]/(I_{(V_1 \setminus V_2)} + I_{V_2}) \otimes_k k[\mathbb P^{a_2}]/I_{Z_1}
\]
is Cohen-Macaulay, defining a zero-dimensional scheme in $\mathbb P^{a_1 + a_2 + 1}$. In particular, 
\[
I_{(V_{1}\setminus V_{2})\times Z_{1}}+ I_{Y}  
\]
is a saturated ideal. Then sheafifying (\ref{SES2}) and taking cohomology, we see that $H^1_*(\mathcal I_X) = 0$, i.e. $X$ is ACM (see \cite{J book}  Lemma 1.2.3).
\end{proof}


\begin{corollary} \label{inclusion implies ACM}
Let $X \subset \mathbb P^{a_1} \times \mathbb P^{a_2}$ be a set of points with the inclusion property. Then $X$ is ACM.
\end{corollary}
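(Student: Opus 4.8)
The plan is to combine the two results that immediately precede this corollary, namely Lemma \ref{star iff inclusion} and Theorem \ref{star implies ACM}. Because $X$ lives in a product of only two projective spaces, all of the required machinery is already in place, and I expect the deduction to be essentially a one-step chaining of known implications.

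First I would invoke Lemma \ref{star iff inclusion}, which asserts that for a finite set in $\mathbb P^{a_1} \times \mathbb P^{a_2}$ the inclusion property and the $(\star)$-property are equivalent. Thus, from the hypothesis that $X$ has the inclusion property, I immediately obtain that $X$ satisfies the $(\star)$-property. Next I would apply Theorem \ref{star implies ACM}, which guarantees that any finite set of points in $\mathbb P^{a_1} \times \mathbb P^{a_2}$ with the $(\star)$-property is ACM. Composing these two facts yields that $X$ is ACM, completing the argument.

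I do not anticipate any genuine obstacle, since the corollary is simply the concatenation of the equivalence in Lemma \ref{star iff inclusion} with the implication in Theorem \ref{star implies ACM}. The one point worth keeping in mind is that the ACM requirement built into Definition \ref{inclusion prop} is automatic in the case $n = 2$: the sets $\pi_1(X_j)$ are finite sets of points in the single projective space $\mathbb P^{a_2}$, and a finite set of points in one projective space is always ACM. This is precisely the observation made at the start of the proof of Lemma \ref{star iff inclusion}, and it is what lets that lemma, and hence this corollary, apply with no further hypotheses.
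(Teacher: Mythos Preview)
Your proposal is correct and matches the paper's own proof exactly: the corollary is obtained by combining Lemma \ref{star iff inclusion} with Theorem \ref{star implies ACM}. Your additional remark about the ACM condition in Definition \ref{inclusion prop} being automatic when $n=2$ is accurate and mirrors the observation the paper makes in the proof of Lemma \ref{star iff inclusion}.
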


\begin{proof}
It follows from Lemma \ref{star iff inclusion} and Theorem \ref{star implies ACM}.
\end{proof}

\begin{remark}
We have defined the inclusion property for a product of any number of
projective spaces. We know from Proposition \ref{inclusion-ACM} that when one of the projective spaces
is $\mathbb P^1$ then the inclusion property implies ACM. Furthermore, we have just seen in Corollary \ref{inclusion implies ACM} that if
we have a product of only two projective spaces then again the inclusion property implies ACM, whether
or not one of them is  $\mathbb P^1$. This motivates
the following conjecture.
\end{remark}

\begin{conjecture} \label{inclusion-ACM-conj}
 Let $X \subset \mathbb P^{a_1} \times \dots \times \mathbb P^{a_n}$ be a set of points with the inclusion property. Then $X$ is ACM.
 \end{conjecture}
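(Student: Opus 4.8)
The plan is to mimic the Mayer--Vietoris strategy behind Theorem~\ref{star implies ACM}, working in $\mathbb P^{N-1}$ where $\overline X$ is a union of $(n-1)$-planes, and to induct on the number $t$ of level sets of $X$ with respect to $\eta_1$. Write $R = k[x]\otimes_k k[w]$, where $x=(x_{1,0},\dots,x_{1,a_1})$ are the coordinates of $\mathbb P^{a_1}$ and $w$ the coordinates of $W:=\mathbb P^{a_2}\times\dots\times\mathbb P^{a_n}$, and set $\mathfrak m = (x_{1,0},\dots,x_{1,a_1})$. After relabelling we may assume $Y_1\supseteq Y_2\supseteq\dots\supseteq Y_t$, each $Y_i=\pi_1(X_i)\subseteq W$ being ACM by the inclusion property. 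Put $X_1=\{P_1\}\times Y_1$ and $Y=\bigcup_{i\ge 2}\{P_i\}\times Y_i$, so $X=X_1\cup Y$. Then $R/I_{X_1}\cong (k[x]/I_{P_1})\otimes_k(k[w]/I_{Y_1})$ is Cohen--Macaulay of Krull dimension $n$ (a point times an ACM scheme), while $Y$ again has the inclusion property with respect to $\pi_1$, now with $t-1$ levels $Y_2\supseteq\dots\supseteq Y_t$, and so is ACM of Krull dimension $n$ by the inductive hypothesis.

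First I would record the gluing criterion. From the exact sequence
\[
0\to R/I_X\to (R/I_{X_1})\oplus(R/I_Y)\to R/(I_{X_1}+I_Y)\to 0
\]
and the long exact sequence in local cohomology at the irrelevant ideal, if $R/I_{X_1}$ and $R/I_Y$ are Cohen--Macaulay of Krull dimension $n$ and $R/(I_{X_1}+I_Y)$ is Cohen--Macaulay of Krull dimension $n-1$, then $H^i_{\mathfrak M}(R/I_X)=0$ for $i\le n-1$, so $R/I_X$ is Cohen--Macaulay of dimension $n$, i.e.\ $X$ is ACM. (For $n=2$ this is exactly the $H^1_*(\mathcal I_X)=0$ statement used in Theorem~\ref{star implies ACM}.) The entire problem is thereby pushed onto the intersection ideal $I_{X_1}+I_Y = I_{P_1}+I_{Y_1}+\bigcap_{i\ge 2}(I_{P_i}+I_{Y_i})$.

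The second step is to identify this intersection. A set-theoretic computation in $\mathbb P^{N-1}$ shows $\overline{X_1}\cap\overline Y = \{x=0\}\times\overline{Y_2}$: distinct points of $\eta_1(X)$ span disjoint lines in the $x$-variables, forcing $x=0$ on the intersection, and $\bigcup_{i\ge 2}Y_i=Y_2$. The inclusion $I_{X_1}+I_Y\subseteq \mathfrak m+I_{Y_2}$ is then easy, since $I_{P_1}\subseteq\mathfrak m$, $I_{Y_1}\subseteq I_{Y_2}$ (as $Y_1\supseteq Y_2$), and any $F\in\bigcap_{i\ge 2}(I_{P_i}+I_{Y_i})$ vanishes on $\{x=0\}\times\overline{Y_2}\subseteq\overline Y$, whence $F|_{x=0}\in I_{Y_2}$ and $F\in\mathfrak m+I_{Y_2}$. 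Because $R/(\mathfrak m+I_{Y_2})\cong k[w]/I_{Y_2}$ is Cohen--Macaulay of dimension $n-1$ precisely since $Y_2$ is ACM, it would suffice to prove the reverse inclusion up to saturation, i.e.\ that $\mathfrak m+I_{Y_2}$ is the saturation of $I_{X_1}+I_Y$.

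This last point is where I expect the real obstacle to lie, and it is the exact analogue of the saturation identity~(\ref{identification}) and the attendant cohomology vanishing in Theorem~\ref{star implies ACM}. There the building blocks $V\times Z$ were genuine products, so Lemma~\ref{rectangular case} supplied saturatedness and Theorem~\ref{thm:decomposition} an explicit ideal; here the fibres $Y_i$ are arbitrary nested ACM subschemes of a product of $n-1$ projective spaces with no product structure, so one must instead recover all of $\mathfrak m$ from $I_{P_1}$ together with the lower levels, and recover $I_{Y_2}$ from $I_{Y_1}$ together with the intersection, using the ACM-ness of the $Y_i$ essentially (cohomologically). I would attempt this by feeding a suitable generalization of Theorem~\ref{thm:decomposition} back into the sequence above and inducting once more on $t$, aiming to show the intersection sheaf has vanishing $H^1_*$. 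Finally, I would note why the linkage route is unavailable: the argument of Definition~\ref{BDL} and Proposition~\ref{inclusion-ACM} requires each $P_i$ to be a hypersurface, which holds only when $a_1=1$; once $a_1>1$ a point of $\mathbb P^{a_1}$ has codimension $a_1$, so a cohomological argument rather than a basic-double-linkage one appears to be forced.
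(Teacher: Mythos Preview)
The statement is Conjecture~\ref{inclusion-ACM-conj}: the paper does \emph{not} prove it, so there is no argument to compare against. Your outline is the natural extension of the Mayer--Vietoris strategy behind Theorem~\ref{star implies ACM} to $n\ge 3$ factors, and you have correctly isolated the decisive step as controlling $I_{X_1}+I_Y$. However, the proposal is not a proof: there is one concrete error and one genuine gap.

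The error is in the identification of the intersection. Your set-theoretic computation $\overline{X_1}\cap\overline Y=\{x=0\}\times\overline{Y_2}$ is correct, but the scheme-theoretic intersection need not be reduced, and $\mathfrak m+I_{Y_2}$ is in general \emph{not} the saturation of $I_{X_1}+I_Y$. Already for $n=2$, $a_1=2$, $t=3$ with $Y_2=Y_3$ and $P_1,P_2,P_3$ collinear in $\mathbb P^2$, one computes $I_{X_1}+I_Y=(I_{P_1}+I_{\{P_2,P_3\}})+I_{Y_2}$, where $I_{P_1}+I_{\{P_2,P_3\}}$ is $\mathfrak m$-primary of colength $2$ rather than $\mathfrak m$ itself; this ideal is already saturated, so its saturation is not $\mathfrak m+I_{Y_2}$. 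More importantly, your reduction ``it would suffice to prove the reverse inclusion up to saturation'' is false: what the local-cohomology argument requires is that $R/(I_{X_1}+I_Y)$ itself be Cohen--Macaulay of dimension $n-1$. If $I_{X_1}+I_Y$ were strictly contained in its saturation, then $H^0_{\mathfrak M}(R/(I_{X_1}+I_Y))\neq 0$, hence $\depth=0$, and the exact sequence would force $H^1_{\mathfrak M}(R/I_X)\neq 0$. Knowing the saturation is Cohen--Macaulay says nothing about the depth of the unsaturated quotient.

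The gap is the one you yourself flag: establishing that $R/(I_{X_1}+I_Y)$ is Cohen--Macaulay of dimension $n-1$. The device you propose, a ``suitable generalization of Theorem~\ref{thm:decomposition}'', is exactly what is unavailable. Theorem~\ref{thm:decomposition} works because for $n=2$ the building blocks $V_k\times Z_k$ are genuine products of zero-dimensional schemes in single projective spaces, whence the explicit product-of-ideals formula and the tensor-product Cohen--Macaulayness used in Theorem~\ref{star implies ACM}. For $n\ge 3$ the fibres $Y_i$ are arbitrary nested ACM subsets of a multiprojective space with no such product structure, and producing an analogous explicit description of $I_Y$ appears to be of the same order of difficulty as the conjecture itself. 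A further technical point: for $n\ge 3$ the vanishing of $H^1_*$ alone is not enough; you need $\depth R/(I_{X_1}+I_Y)\ge n-1$, i.e.\ $H^i_{\mathfrak M}=0$ for all $i\le n-2$.
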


%
%
%

\begin{remark}
The results of this section, especially Theorem \ref{thm:decomposition}, Theorem \ref{star implies ACM} and Corollary \ref{inclusion implies ACM}, can be viewed as an extension of the notion of basic double G-linkage, a multihomogeneous version of which was used  in \cite{FGM2017}. 
\end{remark}


\section{ACM sets of points in $\mathbb P^1\times\mathbb P^n$.}\label{sec:P1xPn}

This section is devoted to a further examination of ACM sets of points in a multiprojective space $\mathbb P^1\times\mathbb P^n$. We denote the coordinate ring of $\mathbb P^1\times\mathbb P^n$ by
  \[
 R = k[x_{0},x_{1}, y_{0}, \dots, y_{n}],
 \]
 where  $\deg (x_{i})=(1,0)$  and $\deg (y_{j})=(0,1)$.

\begin{remark}
We have seen in Lemma \ref{star iff inclusion} that for  $\mathbb P^{a_1} \times \mathbb P^{a_2}$, the $(\star)$-property is equivalent to the inclusion property, which is in fact a more generally defined notion. We have seen in Proposition \ref{inclusion-ACM} and Corollary \ref{inclusion implies ACM} that in $\mathbb P^1 \times \mathbb P^{a_2} \dots \times \mathbb P^{a_n}$ and $\mathbb P^{a_1} \times \mathbb P^{a_2}$, respectively, the inclusion property implies the ACM property. What about the converse?

The ($\star$)-property characterizes the ACM property in $\mathbb P^1 \times \mathbb P^1$; see for instance Theorem 4.11 in \cite{GV-book}.  Thus in $\mathbb P^1 \times \mathbb P^1$ the converse holds. However, 
Example 2.12 of \cite{FGM2017} shows that even for $\mathbb P^1 \times \mathbb P^1 \times \mathbb P^1$ the converse no longer holds. Similarly, Example 4.9 in \cite{GuVT2008b}  shows that, even in $\mathbb P^1\times\mathbb P^2,$ the converse  is again no longer true. 
The next two examples show how tricky the situation is even in $\mathbb P^1\times\mathbb P^2$. Both of them can be checked by the CoCoA software \cite{cocoa}, but they also follow from Theorem \ref{no intersection}.
\end{remark}

\begin{example}\label{ex:4 pt ACM}
	Let $P_i:=[1,i]\in \mathbb P^1$ and $Q_1,Q_2,Q_3$ three generic points in $\mathbb P^2.$
	Let $$X:=\{P_1\times Q_1, P_2\times Q_2, P_1\times Q_3, P_2\times Q_3 \}.$$
Then $X \subset \mathbb P^1 \times \mathbb P^2$ does not have the $(\star)$-property but it is ACM. (This phenomenon was shown already in \cite{GuVT2008b} Example 4.9, but that example consisted of 27 points while this example uses only four points.)
\end{example}

In \cite{FGM2017} Theorem 3.13 it is shown that, in order to determine the ACM property for a set $X$ of reduced points of $(\mathbb P^1)^n,$ it is enough to show the non-existence of certain sub-configurations of $X$.
The next example proves that a similar characterization of the ACM property is not possible in $\mathbb P^1\times\mathbb P^2$.

\begin{example}\label{ex:6pt Not ACM}
	Let $P_i:=[1,i]\in \mathbb P^1$ and $Q_1,Q_2,Q_3,Q_4,Q_5$  generic points in $\mathbb P^2.$
Then, 
\[
X' : = \{ P_1\times Q_1, P_2\times Q_2, P_1\times Q_3, P_2\times Q_3, P_1\times Q_4, P_2\times Q_4 \} 
\]
is not ACM. However, the set 
\[
X'':=X'\cup\{ P_1\times Q_5, P_2\times Q_5 \}
\]
contains as sub-configuration $X'$ and it is ACM. 
\end{example}

%

The following technical result describes a suitable set of generators for an ACM set of points in $\mathbb P^1\times\mathbb P^n$. 


\begin{lemma}\label{gens ACM}
	Let $X$ be an ACM finite set in $\mathbb P^1\times\mathbb P^n.$ Then there exists a set of generators for $I_X$,  $\mathcal G(I_X)\subseteq R$,  such that for any $F\in \mathcal G(I_X)$ we have $F=F'\cdot F''$ where $\deg(F')=(a,0)$  and $\deg(F'')=(0,b)$ for some $a,b\in\N.$ 
\end{lemma}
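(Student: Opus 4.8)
The plan is to reduce the whole statement to a single general hyperplane section in the $\mathbb P^1$--direction, where the Cohen--Macaulay hypothesis forces the section to be a basic double link built out of ``split'' generators. Throughout write $A=k[x_0,x_1]$, $B=k[y_0,\dots,y_n]$, $R=A\otimes_k B$, and call a form \emph{split} if it is a product $F'F''$ with $F'\in A$ (bidegree $(a,0)$) and $F''\in B$ (bidegree $(0,b)$); this is exactly the conclusion we want. First I would record the intersection description of $I_X$. Grouping the points of $X$ by their second coordinate, for each $Q$ occurring put $F_Q=\{P_i:(P_i,Q)\in X\}$ and $M_Q=\prod_{P_i\in F_Q}L_i\in A$, the principal form cutting out $F_Q\subset\mathbb P^1$. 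Since the ideal of a product set is a sum of the two factor ideals (the computation in Lemma \ref{rectangular case}), one gets
\[
I_X=\bigcap_{Q}\big((M_Q)+I_QR\big),
\]
and hence the pointwise criterion: $F\in I_X$ if and only if $M_Q\mid F(x,Q)$ in $A$ for every $Q$, where $F(x,Q)$ is the evaluation of the $y$--variables at (a representative of) $Q$.

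Next I would pass to a general linear form $\ell\in A_1$. Because $X$ is reduced, $I_X$ is radical with minimal primes the line ideals $(L_i)+I_{Q_j}R$, so a general $\ell$ avoids all of them and is a nonzerodivisor on $R/I_X$. Set $\overline R=R/(\ell)=k[u,y_0,\dots,y_n]$ with $\deg u=(1,0)$. The evaluation criterion shows at once that the image $\overline{I_X}=(I_X+(\ell))/(\ell)$ is contained in the nested ideal
\[
\mathcal N:=I_{W_0}+u\,I_{W_1}+\dots+u^{m}I_{W_m}+(u^{m+1}),\qquad W_a:=\{Q:|F_Q|>a\},
\]
where $W_0\supseteq W_1\supseteq\cdots$ and $m+1=\max_Q|F_Q|$. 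Here the ACM hypothesis enters: $\mathcal N$ is precisely a basic double link of the (automatically ACM) nested point sets $W_a\subset\mathbb P^n$, so $\overline R/\mathcal N$ is Cohen--Macaulay by Definition \ref{BDL} and Proposition \ref{inclusion-ACM}; and $\mathcal N$ is visibly generated by the split forms $u^{a}H$ with $H$ a generator of $I_{W_a}$.

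To finish I would let $J\subseteq I_X$ be the ideal generated by \emph{all} split forms lying in $I_X$ and prove $J=I_X$. Since $\ell$ is a nonzerodivisor on $R/I_X$ we have $\Tor_1^R(R/I_X,R/(\ell))=0$, so tensoring $0\to I_X/J\to R/J\to R/I_X\to 0$ with $R/(\ell)$ yields an injection $(I_X/J)\otimes_R R/(\ell)\hookrightarrow R/(J+(\ell))$. Thus it suffices to show $\overline J=\mathcal N$: this forces $J+(\ell)=I_X+(\ell)$, hence $(I_X/J)/\ell(I_X/J)=0$, and graded Nakayama gives $J=I_X$ (and incidentally $\overline{I_X}=\mathcal N$). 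By the membership criterion a split form $M\cdot H'\in I_X$ with $\deg M=a$ and $\ell\nmid M$ has image a nonzero multiple of $u^aH'$, and such an $M$ exists for a given $H'$ exactly when $M_Q\mid M$ for every $Q$ with $H'(Q)\neq 0$. Writing $M=\prod_{i\in T}L_i$ times general linear fillers with $|T|\le a$, the split forms of bidegree $(a,b)$ in $I_X$ therefore have images spanning $u^a\sum_{|T|\le a}(I_{W(T)})_b$, where $W(T)=\{Q:F_Q\not\subseteq T\}$.

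The main obstacle is exactly this last step. Since $\bigcap_{|T|\le a}W(T)=W_a$, the equality $\overline J=\mathcal N$ is equivalent to the additivity
\[
I_{W_a}=\sum_{|T|\le a} I_{W(T)}\qquad\text{for all }a,
\]
an identity about how the ideals of the nested point sets $W(T)\subset\mathbb P^n$ add up, and this is where the Cohen--Macaulay hypothesis on $X$ is genuinely used: it fails for the non-ACM set of Example \ref{ex:6pt Not ACM} (there the degree-one part of the left side is a line through two points while every summand on the right is the ideal of at least three general points, hence has no linear forms). I expect to establish the additivity either by translating the ACM property of $X$ directly into it, or by an induction on $a$ that strips off the deepest level $\{Q:|F_Q|=m+1\}$ and invokes the Cohen--Macaulayness of the resulting hyperplane section, mirroring the basic double linkage argument of Proposition \ref{inclusion-ACM} but now without assuming the level sets are totally ordered.
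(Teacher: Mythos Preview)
Your proposal has a genuine gap precisely where you flag the ``main obstacle.'' The additivity
\[
I_{W_a}=\sum_{|T|\le a} I_{W(T)}
\]
is not a technical detail you can expect to fill in later: it is equivalent to the statement of the lemma. You correctly observe that the ACM hypothesis yields $\overline{I_X}=\mathcal N$ (both quotients are one--dimensional Cohen--Macaulay of degree $|X|$, and one contains the other). But this says nothing about $\overline J$; the inclusion you need is $\mathcal N\subseteq\overline J$, and neither of your proposed approaches (``translate ACM directly'' or ``induct on $a$'') gives a mechanism for producing split elements of $I_X$. The difficulty is structural: after collapsing $k[x_0,x_1]$ to $k[u]$ by a single generic section, every level--set form $L_i$ maps to a nonzero scalar times $u$, so all information about \emph{which} level sets a point sits on is lost --- only the cardinality $|F_Q|$ survives. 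That is exactly the information needed to manufacture the splittings.

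The paper's argument goes in the opposite direction. Rather than cutting down, it \emph{lifts}: introduce one new variable $z_u$ for each of the $t$ level sets, set $S=k[z_1,\dots,z_t,y_0,\dots,y_n]$, and let $J\subset S$ be the intersection of the primes $(z_u)+I_Q$ indexed by the points $P_u\times Q$ of $X$. Extending $J$ to $T=S[x_0,x_1]$ and cutting by the regular sequence $z_u-H_u$ (where $H_u$ defines $P_u$) recovers $R/I_X$; since the latter is Cohen--Macaulay and the heights match, so is $T/\bar J$, and it suffices to prove the split generation for $J$. Now the key step: an element $F\in J$ of bidegree $(a,b)$ expands as $F=\sum_D G_D\cdot\prod_{u\in D}z_u$ over squarefree $z$--monomials, and these monomials are \emph{linearly independent} over $k[y_0,\dots,y_n]$. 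Evaluating at any point $Q$ of a level set $X_h$ with $h\notin D$ kills every summand with $h\in D_j$, and linear independence of the surviving $\prod z_u$ forces $G_D(Q)=0$ individually. Thus each $G_D$ lies in $I_{Y_D}$, and $F$ is already a sum of split forms in $J$. Specializing $z_u\mapsto H_u$ then gives the split generators of $I_X$. Your single--variable reduction destroys exactly this linear independence: in $k[u]$ every degree--$a$ monomial is the same $u^a$, so the term--by--term separation is impossible, and you are left with your unproved additivity. If you want to salvage your line of attack, you would have to enlarge $A$ until the level--set forms become independent --- which is the paper's polarization.
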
  
\begin{proof}
	Let $X=X_1\cup \cdots\cup X_t$ be the decomposition of $X$ as union of level sets. 
	For $u=1,\ldots,t$, let $H_u\in k[x_0,x_1]$ be the form of degree $(1,0)$ defining the hyperplane containing the points of $X_u$. We introduce for each of these linear forms a new variable, let us call them $z_1, \cdots, z_t$. Let $S$ be the polynomial ring $k[z_1,\ldots z_t,y_0,\ldots, y_n]$. We construct an ideal $J\subseteq S$ by intersecting the prime ideals $(z_u, \ell_{j_1},\ldots, \ell_{j_n} )$ in correspondence to the components of $X$. This intersection defines a height $n+1$ ideal of $S$. 
	
	Consider $J$ as an ideal, say $\overline J$, in the ring $T = S[x_{0}, x_{1}]$, where $S$ is defined in the previous paragraph. Being a cone, $\overline J$ continues to be a height $n+1$ ideal. 
	Consider the linear forms  $z_{u} - H_{u}$,  where $1 \leq i \leq t$. Let $L$ be the ideal generated by all these linear forms. We have
	\[
	R/I_X\cong T/(\overline{J},L), 
	\]
	the former of which is ACM. Since $R/I_X$ and $T/\overline J$  both have height $n+1$, we can view the addition of each linear form in $L$ as a proper hyperplane section, giving that $T/\overline{J}$ is also Cohen-Macaulay.
	
	Note that,  the factorization in the statement  is preserved under proper hyperplane sections, so it if enough to prove the theorem for the ideal $J$. In order to do that we set, for $\mathcal{D}\subseteq [t]:=\{1,2,\ldots, t\}$ $$Y_{\mathcal{D}}:=\bigcup\limits_{i\in[t]\setminus \mathcal{D}} \pi_1( X_i)\subseteq \mathbb P^n.$$
	We denote by $I_{Y_{\mathcal D}}$ the ideal of $S$ generated by the forms in the variables $y_i$'s vanishing in $Y_{\mathcal{D}}$. 
	
	We also set $$J':=\sum_{\mathcal{D}\subseteq [t] }  I_{Y_{\mathcal D}} \cdot \left(\prod\limits_{j\in D} z_{j}\right) \subseteq S.$$ 
	We claim that $J=J'$ and this will conclude the proof. 
	Note that by construction we have $J\supseteq J'$. To prove the other inclusion, let denote by $D_1, \ldots, D_m$ all the subsets of $[t]$ (the number of level set)  having cardinality $a$, and take	$F\in J$ be a bihomogeneous form of degree $(a,b)$  such that 
	$$ F=\sum_{j=1}^{m}\left( G_j \cdot\prod_{u\in D_j}z_u\right).$$
	Since $X$ is a set of reduced points, $J$ is generated by such forms. 
	We first show that each summand of $F$ belongs to $J$.  
	Let $k\in\{1,\ldots,m\}$, we set $F_k:=G_k\cdot \prod\limits_{u\in D_k}z_u$ 
	and take an ideal $\mathfrak p =(z_h, \ell_{j_1},\ldots, \ell_{j_n} )$ in the decomposition of $J$. Two cases occur:
	\begin{itemize}
		\item if $h\in D_k$ then trivially $F_k\in \mathfrak p$;
		\item if $h\notin D_k$, say $P\in \mathbb P^n$ such that $I_P=(\ell_{j_1},\ldots, \ell_{j_n}).$ 
		Then the form $\bar{F}:= \sum\limits_{j\, :\, h\notin D_j} F_j$ vanish at $P.$
		i.e.  $$\bar{F}(z_1,\ldots,z_t,P)=\sum\limits_{j\, :\, h\notin D_j}\left( G_j(P)\cdot \prod\limits_{u\in D_j}z_u\right) =0.$$
		But, since $\prod\limits_{u\in D_j}z_u$ are l.i. in $S_{(a,0)}$, this is true if and only if $G_j(P)=0$. Then in particular $G_k\in I_P$ and $F_k\in \mathfrak p.$
	\end{itemize}
Then $F_k\in J$ and it vanish in each point of $Y_{D_k}$ so $F_k\in I_{Y_{D_k}}\cdot \prod\limits_{j\in D_k} z_{j}$ and we are done.
\end{proof}

\begin{remark}\label{gens2}
	The proof of Lemma \ref{gens ACM} provides a more accurate description of these sets of generators. If $F=F'\cdot F''$  is such an element, then $F'$ is product of linear forms of degree $(1,0),$ each of them defining a hyperplane containing a level set of $X.$ Moreover, if we denote by $X'$  the set of points of $X$ outside the level sets concerning $F'$, then $F''$ is an element in a minimal generating set of $I_{\pi_1(X')}\subseteq k[y_0,\ldots, y_n].$
\end{remark}
	
	Since Proposition \ref{inclusion-ACM} ensures the ACM property for those sets of points with the inclusion property, from now on we focus on sets of points failing the inclusion property.

\begin{notation}
Let $X$ be a set of points in $\mathbb P^1\times\mathbb P^n$ without the inclusion property. We introduce a new partition on $X$.
Let $X:=X_1\cup X_2 \cup \cdots \cup X_t$ be the decomposition of $X$ into level sets.
For any $i=1\ldots,t,$ we set 
\[
Y_i  =  \pi_1(X_i)\subset \mathbb P^n 
\]
and observe that if $P_i \in \eta_1(X) \subset \mathbb P^1$ then
\[
X_i  =   \bigcup \limits_{Q\in Y_i} P_i\times Q.
\]
Then we define $A_X$ and $B_X$ by 
\[
X= A_X\cup B_X 
\]
where $P\times Q\in B_X$ if and only if $Q\in \bigcap\limits_{i=1}^{t} Y_i.$  See Figure \ref{def AX and BX}.
We denote $A_i(X):=X_i\cap A_X$ and $B_i(X):=X_i\cap B_X.$
Moreover we set  
\[
Y:=\pi_1(X)\subseteq \mathbb P^n,\ \text{and}\  B_Y:=\pi_1(B_X) \subseteq \mathbb P^n.
\]

    %
\end{notation}


The idea of the above notation is that if we consider the decomposition of $X$ into its $t$ level sets and $\{ P_1,\dots, P_t \} = \eta_1(X)$ then $A_X$ is the set of points $P \times Q \in X$ so that $P_i \times Q \notin X$ for at least one $i$, and $B_X$ is the set of points $P \times Q$ so that $P_i \times Q \in X$ for all $1 \leq i \leq t$.

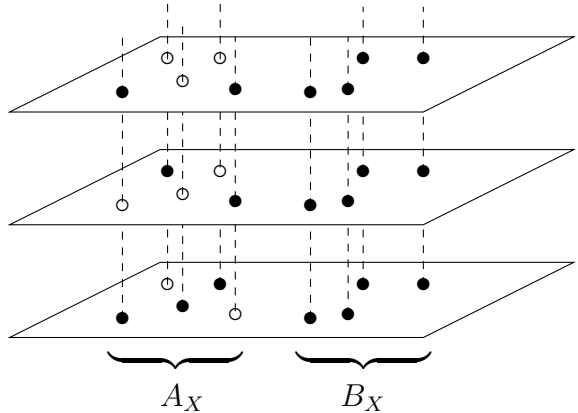
\begin{figure}[h] 

\begin{tikzpicture}
\draw [fill=white!10] (8,3) -- (2.5,3) -- (4.5,4) -- (10,4) -- cycle;
\draw [fill=white!10] (8,1.5) -- (2.5,1.5) -- (4.5,2.5) -- (10,2.5) -- cycle;
\draw [fill=white!10] (8,0) -- (2.5,0) -- (4.5,1) -- (10,1) -- cycle;

\node at (4,3.25) {$\bullet$};
\node at (5.5,3.3) {$\bullet$};
\node at (4.6,3.7) {$\circ$};
\node at (5.3,3.7) {$\circ$};
\node at (4.8,3.4) {$\circ$};

\node at (4,1.75) {$\circ$};
\node at (5.5,1.8) {$\bullet$};
\node at (4.6,2.2) {$\bullet$};
\node at (5.3,2.2) {$\circ$};
\node at (4.8,1.9) {$\circ$};

\node at (4,.25) {$\bullet$};
\node at (5.5,.3) {$\circ$};
\node at (4.6,.7) {$\circ$};
\node at (5.3,.7) {$\bullet$};
\node at (4.8,.4) {$\bullet$};

\node at (6.5,.25) {$\bullet$};
\node at (7,.3) {$\bullet$};
\node at (8,.7) {$\bullet$};
\node at (7.2,.7) {$\bullet$};

\node at (6.5,1.75) {$\bullet$};
\node at (7,1.8) {$\bullet$};
\node at (8,2.2) {$\bullet$};
\node at (7.2,2.2) {$\bullet$};

\node at (6.5,3.25) {$\bullet$};
\node at (7,3.3) {$\bullet$};
\node at (8,3.7) {$\bullet$};
\node at (7.2,3.7) {$\bullet$};

\draw[dashed] (4,3.25) -- (4,4);
\draw[dashed] (5.5,3.3) -- (5.5,4);
\draw[dashed] (4.6,3.7) -- (4.6,4.45);
\draw[dashed] (5.3,3.7) -- (5.3,4.45);
\draw[dashed] (4.8,3.4) -- (4.8,4.15);

\draw[dashed] (4,1.8) -- (4,3);
\draw[dashed] (5.5,3) -- (5.5,1.8);
\draw[dashed] (4.6,3) -- (4.6,2.2);
\draw[dashed] (5.3,3) -- (5.3,2.2);
\draw[dashed] (4.8,1.9) -- (4.8,3);

\draw[dashed] (4,.25) -- (4,1.5);
\draw[dashed] (5.5,.35) -- (5.5,1.5);
\draw[dashed] (4.6,.7) -- (4.6,1.5);
\draw[dashed] (5.3,.7) -- (5.3,1.5);
\draw[dashed] (4.8,.4) -- (4.8,1.5);

\draw[dashed] (6.5,.25) -- (6.5,1.5);
\draw[dashed] (7,.3) -- (7,1.5);
\draw[dashed] (8,.7) -- (8,1.5);
\draw[dashed] (7.2,.7) -- (7.2,1.5);

\draw[dashed] (6.5,1.75) -- (6.5,3);
\draw[dashed] (7,1.8) -- (7,3);
\draw[dashed] (8,2.2) -- (8,3);
\draw[dashed] (7.2,2.2) -- (7.2,3);

\draw[dashed] (6.5,3.25) -- (6.5,4);
\draw[dashed] (7,3.3) -- (7,4);
\draw[dashed] (8,3.7) -- (8,4.4);
\draw[dashed] (7.2,3.7) -- (7.2,4.4);

\node at (4.7,-.3) {$\underbrace{\hspace{.7in}}$};
\node at (7.2,-.3) {$\underbrace{\hspace{.7in}}$};
\node at (4.8,-.8) {$A_X$};
\node at (7.2,-.8) {$B_X$};

\end{tikzpicture}

\caption{Definition of $A_X$ and $B_X$.}
\label{def AX and BX}
\end{figure}

\begin{notation}  
We will need the following invariants.
\[
N_0(X) := \# \pi_{1} (A_X) \ \  \hbox{ and } \ \  N_1(X) := \# \pi_{1} (B_X)
\] 
and 
\[
\mathcal{D}(X):= \bigcup\limits_{i\in \mathbb Z} \left\{ {N_0(X)+i \choose n}, \binom{N_0(X) +i}{n} +1,  \ldots, {N_0(X)+i+1 \choose n}-N_0(X) \right\}.
\]
\end{notation}


\begin{example}
Say $n=2$ and $N_0(X) = 4$. Then $\mathcal D(X)$ is the uncrossed set of numbers in Figure \ref{def of d(x)}. 

\bigskip
\bigskip

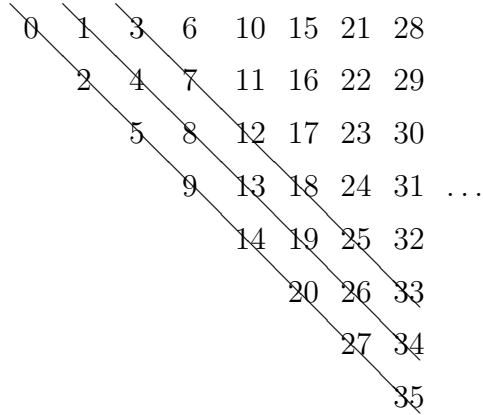
\begin{figure}[h]
\begin{picture}(160,160)(10,10)
\thinlines
\put (10,160){0}
\put (30,160){1}
\put (30,140){2}
\put (50,160){3}
\put (50,140){4}
\put (50,120){5}
\put (70,160){6}
\put (70,140){7}
\put (70,120){8}
\put (70,100){9}
\put (90,160){10}
\put (90,140){11}
\put (90,120){12}
\put (90,100){13}
\put (90,80){14}
\put (110,160){15}
\put (110,140){16}
\put (110,120){17}
\put (110,100){18}
\put (110,80){19}
\put (110,60){20}
\put (130,160){21}
\put (130,140){22}
\put (130,120){23}
\put (130,100){24}
\put (130,80){25}
\put (130,60){26}
\put (130,40){27}
\put (150,160){28}
\put (150,140){29}
\put (150,120){30}
\put (150,100){31}
\put (150,80){32}
\put (150,60){33}
\put (150,40){34}
\put (150,20){35}
\put (170,100){$\dots$}

\put (5,173) {\line(1,-1){155}}
\put (25,173) {\line(1,-1){135}}
\put (45,173) {\line(1,-1){115}}

\end{picture}
\caption{Definition of $\mathcal D(X)$ when $N_0(X) = 4$, $n=2$.}
\label{def of d(x)}
\end{figure}
\noindent By deleting one more diagonal we get the set $\mathcal D(X)$ for the points in Figure 2.
\end{example}

We now describe a construction of a class of sets which are built up by adding points in a certain way, and where we can describe exactly which of the resulting sets are ACM and which are not. To begin we make a stronger assumption on $A_X$, namely that if $P_1 \times Q_1$ and $P_2 \times Q_2$ are both in $A_X$ then $Q_1 \neq Q_2$. That is, $Y_i \cap Y_j \subset B_Y$ for any $i \neq j$. (This is a restriction only if $t \geq 3$.) The result says that if you fix the points of $A_X$ and keep adding generic points to $\pi_1(B_X)$ then $X = A_X \cup B_X$ will switch between being ACM and not being ACM in a predictable way.

In the next results we will assume the ambient space is $\mathbb P^1\times\mathbb P^n$ where $n \geq 2$.
The exclusion of $n=1$ is not restrictive for this section. Indeed, we are focusing on sets of points failing the inclusion property and, from Proposition \ref{star iff inclusion}  and Theorem 4.11 in \cite{GV-book}, the inclusion property characterize ACM sets of points in $\mathbb P^1\times\mathbb P^1$. 

\begin{theorem}\label{no intersection}
	Let $X\subseteq \mathbb P^1\times\mathbb P^n$, $n \geq 2$, be a finite set without the inclusion property such that the points in $\pi_1(A_X)$ and in $B_Y = \pi_{1}(B_X)$ are generic in $\mathbb P^n$. Moreover, assume $Y_i\cap Y_j\subseteq B_Y$ for any $i \neq j.$ Then  $X$ is ACM if and only if  $N_1(X) \in \mathcal{D}(X)$.
\end{theorem}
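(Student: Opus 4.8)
The plan is to pass to the standard-graded picture, reduce the ACM property to the vanishing of a finite-length deficiency module, and then translate that vanishing into the numerical condition $N_1\in\mathcal D(X)$ via Hilbert functions of generic points in $\mathbb P^n$. Embed $\bar X\subset\mathbb P^{n+2}$ as the reduced union of lines associated to $I_X$: the point $P\times Q$ corresponds to the line joining $P\in\Lambda:=\{y_0=\dots=y_n=0\}\cong\mathbb P^1$ to $Q\in\Pi:=\{x_0=x_1=0\}\cong\mathbb P^n$. As in the proof of Theorem \ref{star implies ACM}, $X$ is ACM if and only if $M:=H^1_*(\mathcal I_{\bar X})=0$, and because $\bar X$ is one-dimensional $M$ has finite length. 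In bidegree $(a,b)$ one has $\dim_k M_{(a,b)}=\dim_k H^0(\mathcal O_{\bar X}(a,b))-H_X(a,b)$, so the whole problem is to compute the bigraded Hilbert function $H_X$ and the sections of $\mathcal O_{\bar X}$, and to locate the bidegrees where they differ.

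First I would compute $H_X(a,b)$. A form $G$ of degree $(a,b)$ lies in $I_X$ if and only if $G(P_u,\cdot)\in(I_{Y_u})_b$ for each level $u$, so $H_X(a,b)$ is the rank of the evaluation map $\Phi_{a,b}\colon k[x_0,x_1]_a\otimes k[y_0,\dots,y_n]_b\to\bigoplus_{u=1}^t k[y_0,\dots,y_n]_b/(I_{Y_u})_b$. This map factors through $\mathrm{ev}_a\otimes\mathrm{id}$, where $\mathrm{ev}_a\colon k[x_0,x_1]_a\to k^t$ is evaluation at the $t$ distinct points $P_1,\dots,P_t$ and has rank $\min(a+1,t)$. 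The hypothesis $Y_i\cap Y_j\subseteq B_Y$ says the $A_u:=\pi_1(A_u(X))$ are pairwise disjoint, and genericity gives $H_{Y_u}(b)=\min\{\binom{b+n}{n},|A_u|+N_1\}$. Thus for $a\ge t-1$ one gets $H_X(a,b)=\sum_u H_{Y_u}(b)$, while for $a<t-1$ the rank of $\Phi_{a,b}$ is governed by the interaction of the Vandermonde subspace $\mathrm{im}(\mathrm{ev}_a)\subseteq k^t$ with the ideals $(I_{Y_u})_b$. The genericity hypotheses are exactly what make all these ranks equal to their expected values and, crucially, force the answer to depend only on $N_0=\sum_u|A_u|$, $N_1$ and $n$, not on how $N_0$ is distributed among the levels.

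It remains to see where $M$ can be nonzero. I expect $M$ to be supported, in the $b$-grading, exactly in the range where the $N_1$ generic points $B_Y$ have already finished imposing conditions but the $N_0+N_1$ generic points $Y=\pi_1(X)$ have not. Set $\delta_0=\max\{\delta:\binom{\delta+n}{n}\le N_1\}$ and $\delta_1=\min\{\delta:\binom{\delta+n}{n}\ge N_0+N_1\}$, so that $H_{B_Y}(b)=N_1$ precisely for $b>\delta_0$ and $H_Y(b)=N_0+N_1$ precisely for $b\ge\delta_1$. Unwinding the definition of $\mathcal D(X)$ (write $m=N_0+i$), one checks that $N_1\in\mathcal D(X)$ is equivalent to the existence of $\delta$ with $\binom{\delta+n}{n}\le N_1$ and $N_0+N_1\le\binom{\delta+1+n}{n}$, that is, to $\delta_1\le\delta_0+1$. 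The plan is to prove that $M_{(a,b)}=0$ unless $\delta_0<b<\delta_1$ and that $M\ne0$ in that range; consequently $M=0$ if and only if the open interval $(\delta_0,\delta_1)$ contains no integer, i.e. $\delta_1\le\delta_0+1$, i.e. $N_1\in\mathcal D(X)$. As a check, for $N_0=2$, $n=2$ this gives $\mathcal D(X)\cap\{1,2,3\}=\{1,3\}$, matching the ACM sets with $N_1=1,3$ and the non-ACM set with $N_1=2$ in Examples \ref{ex:4 pt ACM} and \ref{ex:6pt Not ACM}.

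The main obstacle will be the computation in this critical band of bidegrees. On the one hand one must pin down $H_X(a,b)$ (equivalently the rank of $\Phi_{a,b}$) for the small values of $a$ near $t$, where the Vandermonde space no longer surjects; on the other hand one must compute $\dim H^0(\mathcal O_{\bar X}(a,b))$, which records the gluing of the line arrangement at the points of $B_Y$, where $t$ lines of $\bar X$ are concurrent. The convenient device is the general hyperplane section $\bar X\cap V(\ell)$ for $\ell$ a general form of degree $(1,0)$ (a nonzerodivisor on $R/I_X$, since a general $(1,0)$-form avoids the linear parts of all point ideals): it is a zero-dimensional scheme of degree $N_0+tN_1$ consisting of $N_0$ reduced points together with $N_1$ length-$t$ subschemes supported on $B_Y$, and $X$ is ACM if and only if $(I_X,\ell)$ is saturated, i.e. the Hilbert function of $R/(I_X,\ell)$ agrees with that of this section. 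Carrying out this bookkeeping, and verifying via the generic position hypotheses that every intermediate Hilbert function takes its generic value so that the discrepancy is concentrated precisely in the interval $(\delta_0,\delta_1)$, is the technical heart of the argument; the homological reduction and the translation to $\mathcal D(X)$ above are the routine steps. The generator description of Lemma \ref{gens ACM} can serve as an independent check on the shape of $I_X$ in these degrees.
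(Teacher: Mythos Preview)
Your numerical reformulation is correct and genuinely clarifying: the identity ``$N_1\in\mathcal D(X)$ iff there is some $\delta$ with $\binom{\delta+n}{n}\le N_1$ and $N_0+N_1\le\binom{\delta+n+1}{n}$, i.e.\ iff $\delta_1\le\delta_0+1$'' is exactly the right way to read $\mathcal D(X)$, and the sanity check against Examples~\ref{ex:4 pt ACM} and~\ref{ex:6pt Not ACM} is on target. The factoring of $\Phi_{a,b}$ through the Vandermonde map is also correct, and for $a\ge t-1$ your formula $H_X(a,b)=\sum_u H_{Y_u}(b)$ follows immediately.

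That said, what you have written is an outline rather than a proof. You explicitly defer the two hard computations --- the rank of $\Phi_{a,b}$ for $a<t-1$ and the dimension of $H^0(\mathcal O_{\bar X}(a,b))$ --- and the latter is where the gluing at the $t$-fold points of $B_Y$ enters, which is exactly the content of the theorem. Your assertion that genericity forces all ranks to their expected values and makes the answer depend only on $(N_0,N_1,n)$ is the crux, and it is not justified: the Vandermonde subspace $\mathrm{im}(\mathrm{ev}_a)\subset k^t$ is a \emph{fixed} subspace (the $P_u$ carry no genericity hypothesis), and you must show that its interaction with $\bigoplus_u (I_{Y_u})_b$ is nevertheless generic because the $Y_u$ are. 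There is also some slippage between the standard-graded deficiency module $H^1_*(\mathcal I_{\bar X})$ on $\mathbb P^{n+2}$ and a bigraded object: your formula $\dim M_{(a,b)}=\dim H^0(\mathcal O_{\bar X}(a,b))-H_X(a,b)$ needs a precise meaning for $\mathcal O_{\bar X}(a,b)$ once you are in $\mathbb P^{n+2}$.

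The paper takes a quite different and more economical route. It argues by induction on the number $t$ of level sets, splitting $X=X'\cup X_t$ with $X'=X_1\cup\cdots\cup X_{t-1}$, and uses the Mayer--Vietoris sequence for $I_{X'}$ and $I_{X_t}$ to reduce the ACM question to whether $I_{X'}+I_{X_t}$ is saturated. The saturation is identified concretely as $(L_t,F)+I_W$ with $W$ the cone over $B_Y$, so the whole problem becomes: do the degree-$(N_0+i-n+1)$ minimal generators of $I_{B_Y}$ already lie in $I_{X'}+I_{X_t}$? That is settled by a one-line count of forms vanishing on $B_Y$ together with $a=|A_t(X)|$ or $b=N_0-a$ further generic points, and the inequality one needs is precisely $s\le\binom{N_0+i}{n-1}-N_0$, i.e.\ $N_1\in\mathcal D(X)$. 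This bypasses both of the computations you postpone: no global $H^0$ on the line configuration, and no analysis of $\Phi_{a,b}$ for small $a$. Your approach could in principle be pushed through, and your $\delta_0,\delta_1$ reformulation is worth keeping, but as written it does not yet contain the argument.
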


\begin{proof}

The coordinate ring for $\mathbb P^1 \times \mathbb P^n$ is $R = k[x_0,x_1,y_0,\dots,y_n]$ with its bihomogeneous grading, which we can also consider with its standard grading as the coordinate ring for $\mathbb P^{n+2}$. From this point of view, a set of points in $\mathbb P^1 \times \mathbb P^n$ can be viewed as a union of lines in $\mathbb P^{n+2}$.

We  make some general observations. Let $X = X_1 \cup \dots \cup X_t$ be the decomposition of $X$ into level sets with respect to $\eta_1$ and let $X' = X_1 \cup \dots \cup X_{t-1}$. If $t=1$ there is only one level set, so the hypothesis that $X$ does not have the inclusion property is impossible. Similarly, if $N_0(X) = 0$ or 1 then $X$ must have the inclusion property. Thus we must assume that $t \geq 2$ and $N_0(X) \geq 2$.

Assume first that $N_1(X) \in \mathcal D(X)$.  We want to show that $X$ is ACM.  Since $N_0(X) \geq N_0(X')$, we get $\mathcal D(X) \subset \mathcal D(X')$ (we remove more diagonals in Figure \ref{def of d(x)} for $\mathcal D(X)$ than for $\mathcal D(X')$).
Thus we  have $N_1(X') = N_1(X)   \in \mathcal D(X) \subseteq \mathcal D(X')$. 

We proceed by induction on $t$. If $t=2$ then it is clear that $X'$ is ACM. Otherwise we can assume that  $X'$ is ACM by induction, since $N_1(X') \in \mathcal D(X')$. We also know that $X_t$ is ACM. We want to show that $X = X' \cup X_t$ is ACM. 

We will view $X, X'$ and $X_t$ as unions of lines in $\mathbb P^{n+2}$.
Consider the exact sequence
\[
0 \rightarrow I_X \rightarrow I_{X'} \oplus I_{X_t} \rightarrow I_{X'} + I_{X_t} \rightarrow 0.
\]
We sheafify this sequence and take cohomology over all twists. Since $X'$ and $X_t$ are ACM unions of lines, we see that $X$ is ACM if and only if $I_{X'} + I_{X_t}$ is a saturated ideal. 

Let $W$ be the scheme in $\mathbb P^{n+2}$ defined by $I_{B_Y} \subset k[y_0,\dots,y_n]$. We note the following facts. 

\begin{itemize}

\item each component of $W$ is defined by $n$ linear forms, hence is a plane.

\item any two components of $W$ meet in a line; in fact, $W$ is the cone over $B_Y$ whose vertex is this line.

\item $W$ is an ACM union of $|B_X|$ planes.

\item $B_i(X) = X_i \cap B_X$ (in $\mathbb P^1 \times \mathbb P^n$) is defined by $I_{B_i(X)} = I_{P_i} + I_{B_X}$, and in $\mathbb P^{n+2}$ is thus a hyperplane section of $W$; its components are lines all passing through a single point.

\item Let $F \in k[x_0,x_1]$ be the product of the linear forms defining the points $\eta_1(X') \subset \mathbb P^1$. Then $B_{X'}$ is defined by the saturated ideal $(F) + I_W$.

\item Let $(P_i,Q_1), (P_j,Q_2) \in B_X$. If $i=j$ and $Q_1 \neq Q_2$ then the corresponding lines in $\mathbb P^{n+2}$ meet in a point; however, this does not affect $X_t \cap X'$ since in this case the two points are either both in $X_t$ or both in $X'$. If $i \neq j$ and $Q_1 \neq Q_2$ then the lines do not meet. If $i \neq j$ and $Q_1 = Q_2$ then the lines meet in the point defined by $I_{P_i} + I_{P_j} + I_{Q_1}$ (which is uniquely determined even if $i$ and $j$ change).

\end{itemize}

The condition $Y_i\cap Y_j\subseteq B_Y$ for any $i \neq j$ implies that the scheme defined by $I_{X'} + I_{X_t}$ has support in the union of points defined by $\bigcap_{Q \in B_Y} I_{P_1} + I_{P_2} + I_Q $. More precisely, if $L_t \in k[x_0,x_1]$ is the linear form defining $P_t$ in $\mathbb P^1$ then the saturation of 
$I_{X'} + I_{X_t}$ is $(L_t,F) + I_W$.

Thus we want to show that if $N_1(X) \in \mathcal{D}(X)$ then $I_{X'} + I_{X_t} = (L_t,F) + I_W$. The inclusion $\subseteq$ is clear, so we must prove $\supseteq$. In particular, 

\begin{quotation}
{\em we have to show that every minimal generator of $I_W$ (which are all in $k[y_0,\dots,y_n]$) is in $I_{X'} + I_{X_t}$.}
 \end{quotation}
 
\noindent If $A_t(X) = \emptyset$ then $I_{X_t} = (L_t,I_W)$ so we are done. Thus in particular we may assume that $A_t(X) \neq \emptyset$.

Now we consider $X_t$. If $|A_t(X)| = N_0(X)$, then the assumption that $Y_i \cap Y_j \subseteq B_Y$ forces all other $A_i(X)$ (if any) to be empty, violating the assumption that the inclusion property does not hold. 
Hence we can assume that $1 \leq |A_t(X)| \leq N_0(X)-1$. 

Now consider the $h$-vector of $W$. Setting $i$ to be the choice in the definition of $\mathcal D(X)$ that gives $N_1(X)$, we have $\deg W = N_1(X) = \binom{N_0(X)+i}{n} + s$ and the $h$-vector is
\[
\left ( 1, \binom{n}{n-1}, \binom{n+1}{n-1}, \dots, \binom{N_0(X) +i-1}{n-1}, s \right )
\]
where 
\begin{equation}\label{ineq1}
0 \leq s \leq \binom{N_0(X) +i}{n-1} - N_0(X) 
\end{equation}
and the $s$ occurs in degree $N_0(X) + i -n+1$.  The inequality (\ref{ineq1}) means that the number of minimal generators of $I_W$ in degree $N_0(X)+i-n+1$ is at least $N_0(X)$. More importantly, note that the number of minimal generators in degree $N_0(X) +i-n+1$ is exactly $\binom{N_0(X)+i}{n-1} - s$.

We want to see that all of these minimal generators are in $I_{X_t} + I_{X'}$. Lemma \ref{gens ACM} gives a description of the minimal generators of an ACM set of points, but the important thing for us now is to consider the minimal generators that only involve the $y_i$. Let $a$ be the number of points of $A_X$ lying in $X_t$ and $b$ the number not lying on $X_t$. Since the points of $\pi_1(A_X)$ are generic, we have $\binom{N_0(X)+i}{n-1} - s - a$ such minimal generators in $I_{X_t}$ and $\binom{N_0(X)+i}{n-1} - s -b$ such minimal generators in $I_{X'}$. Now we use the assumption that $Y_i\cap Y_j\subseteq B_Y$ and that the points of $\pi_1(A_X)$ are chosen generically. Then the sum has 
\[
2 \cdot \left [ \binom{N_0(X)+i}{n-1} - s \right ] -a -b = 2 \cdot \left [ \binom{N_0(X)+i}{n-1} -s \right ] - N_0
\]
minimal generators involving only the $y_i$ in degree $N_0(X) +i-n+1$. We have to check that this is enough. 
Indeed,
\[
2 \cdot \left [ \binom{N_0(X)+i}{n-1} - s \right ] - N_0 \geq \binom{N_0(X)+i}{n-1} - s
\]
if and only if 
\[
s \leq \binom{N_0(X)+i}{n-1} - N_0(X),
\]
which is equivalent to $N_1(X) \in \mathcal D$. 

Note that $I_W$ may also have minimal generators in degree $N_0(X)+i-n+2$, but this does not interfere with the question of saturation for $I_{X_t} + I_{X'}$.

Note also that this argument simultaneously takes care of the inductive step (taking $t=2$).

The converse is almost the same argument. Indeed, if $X$ is ACM then $I_{X'} + I_{X_n}$ is saturated, and the argument above explains why we must have $N_1(X) \in \mathcal D(X)$.
\end{proof}

\begin{remark} \label{subconfigurations}
As mentioned in the introduction, if $X$ is a finite set of points in $(\mathbb P^{1})^n$ then there is a combinatorial condition on the subsets of $X$ that completely determines whether $X$ is ACM or not.
That is, the ACM question is determined by the existence or not of certain kinds of subconfigurations. Here we see that this is no longer true even in $\mathbb P^1 \times \mathbb P^n$ ($n \geq 2$). Indeed, for sets of points $X = A_X \cup B_X$ satisfying the conditions of the theorem, one can keep adding ``stacked" generic points to $B_X$ (adding one new point in each level set so that $\pi_1 (X)$ only increases by one point),  repeating this procedure as often as desired, and the ACM property will depend only on the cardinality of $B_X$ (since only $N_1(X)$ is increasing, not $N_0(X)$).
\end{remark}

Our next goal is to partially generalize Theorem \ref{no intersection}, removing the assumption $Y_i\cap Y_j\subseteq B_Y$ for any $i\neq j$.

\begin{theorem}\label{yes intersection}
	Let $X \subseteq \mathbb P^1\times\mathbb P^n$ be a finite set without the inclusion property such that the points in $\pi_1(A_X)$ and in $B_Y = \pi_{1}(B_X)$ are generic in $\mathbb P^n$. If $N_1(X) \in \mathcal{D}(X)$ then  $X$ is ACM. 
\end{theorem}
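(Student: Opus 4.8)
The plan is to follow the architecture of the proof of Theorem~\ref{no intersection} as closely as possible, the key new phenomenon being that a generic point of $\pi_1(A_X)$ may now be reused in several level sets. As there, I would view $X$ as a union of lines in $\mathbb{P}^{n+2}$ and induct on the number $t$ of level sets of $X$ with respect to $\eta_1$, writing $X = X' \cup X_t$ with $X' = X_1 \cup \dots \cup X_{t-1}$. Exactly as in Theorem~\ref{no intersection} one may assume $t \geq 2$ and $N_0(X) \geq 2$, and $X'$ is ACM (either by induction, using $N_1(X')=N_1(X)\in\mathcal D(X)\subseteq\mathcal D(X')$ since $N_0(X')\leq N_0(X)$, or because $X'$ has the inclusion property and hence is ACM by Corollary~\ref{inclusion implies ACM}); $X_t$ is a single level set and so is ACM. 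Sheafifying $0\to I_X\to I_{X'}\oplus I_{X_t}\to I_{X'}+I_{X_t}\to 0$ and taking cohomology reduces the problem, since $X'$ and $X_t$ are ACM unions of lines, to showing that $I_{X'}+I_{X_t}$ is a saturated ideal, i.e.\ that $I_Z\subseteq I_{X'}+I_{X_t}$ where $Z = X'\cap X_t$ (the reverse inclusion being automatic).

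The new input is the geometry of $Z$. Two lines $P_i\times Q$ and $P_j\times Q'$ of $X$ meet in $\mathbb{P}^{n+2}$ exactly when $Q=Q'$, so dropping the hypothesis $Y_i\cap Y_j\subseteq B_Y$ enlarges $Z$: it is now supported on $B_Y\cup C$, where $C := \pi_1(A_t(X)) \cap \bigl(\bigcup_{i<t}\pi_1(A_i(X))\bigr)$ is the set of generic points reused between level $t$ and an earlier level. As in the earlier proof one has $L_t\in I_{X_t}$ and $F:=\prod_{i<t}L_i\in I_{X'}$, so $(L_t,F)\subseteq I_{X'}+I_{X_t}$; in particular $(x_0,x_1)^{t-1}\subseteq I_{X'}+I_{X_t}$, which already produces the $x$-direction fattening of $Z$ over $B_Y$ and shows that the obstruction $I_Z/(I_{X'}+I_{X_t})$ is a finite-length module supported in low $x$-degree.

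The heart of the argument is the pure-$y$ generator count in the critical degree $d := N_0(X)+i-n+1$, where $i$ realizes $N_1(X)\in\mathcal D(X)$. Set $V := (I_{B_Y})_d$, $U_1 := (I_{Y_t})_d$ and $U_2 := (I_{\pi_1(X')})_d$, viewed inside $k[y_0,\dots,y_n]_d$; these are the pure-$y$ parts of $I_{X_t}$, $I_{X'}$ in degree $d$, and the pure-$y$ part of $I_Z$ in degree $d$ is $(I_{B_Y\cup C})_d =: T$. Writing $S_1 = \pi_1(A_t(X))$, $S_2 = \bigcup_{i<t}\pi_1(A_i(X))$, so $C=S_1\cap S_2$ and $S_1\cup S_2=\pi_1(A_X)$ has $N_0(X)$ points, the subspaces $U_1,U_2$ have codimensions $a=|S_1|$ and $b=|S_2|$ in $V$, while $T$ has codimension $c=|C|$. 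Using $\dim V = \binom{N_0(X)+i}{n-1}-s$, one checks that $N_1(X)\in\mathcal D(X)$ is \emph{equivalent} to $N_0(X)\leq \dim V$. Since the $A_X$-points are generic, $S_1\cup S_2$ (of size $N_0(X)=a+b-c$) then imposes independent conditions, so $U_1\cap U_2 = (I_{B_Y\cup S_1\cup S_2})_d$ has codimension $a+b-c$ and
\[
\dim(U_1+U_2) = (\dim V - a)+(\dim V - b)-(\dim V - (a+b-c)) = \dim V - c = \dim T .
\]
As $U_1+U_2\subseteq T$ and the dimensions agree, $U_1+U_2=T$. Thus the collisions enlarge the supply-overlap and the target symmetrically, and the \emph{same} numerical condition $N_1(X)\in\mathcal D(X)$ that worked in Theorem~\ref{no intersection} again guarantees that all degree-$d$ pure-$y$ generators of $I_Z$ lie in $I_{X'}+I_{X_t}$.

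The step I expect to be the main obstacle is controlling the remaining generators of $I_Z$: the higher-degree pure-$y$ generators (in degree $d+1$) and, above all, the \emph{mixed} generators produced by the collision fat points, which over a point $Q\in C$ have length $r_Q<t-1$ rather than $t-1$, so they are not captured by $(L_t,F)$ alone. For the former I would argue, as in Theorem~\ref{no intersection}, that they do not interfere with saturation because the obstruction module is concentrated in the critical degree. For the latter I would work locally at each $Q\in C$, realizing its fattening generators as products $\bigl(\prod_{i<t,\,Q\in Y_i}L_i\bigr)\cdot g_Q$, where $g_Q$ is a $y$-form vanishing on the projections of the remaining earlier levels (so the product lies in $I_{X'}$) suitably combined with an element of $I_{X_t}$; genericity of $\pi_1(A_X)$ and the description of generators of ACM point sets in Lemma~\ref{gens ACM} guarantee such $g_Q$ exist in the required degree. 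The delicate bookkeeping here is to carry this out simultaneously over overlapping collisions, but since we only need the single inclusion $I_Z\subseteq I_{X'}+I_{X_t}$ and have already pinned the obstruction to low $x$-degree via $(x_0,x_1)^{t-1}\subseteq I_{X'}+I_{X_t}$, the whole question collapses back onto the critical-degree count established above.
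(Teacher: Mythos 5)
Your overall architecture differs from the paper's. You keep the level-set induction of Theorem \ref{no intersection} and try to absorb all the collisions at once; the paper instead treats the collision-free case first (Theorem \ref{no intersection}) and then adds the colliding points \emph{one at a time}, at each step showing that $I_{X'}+I_P$ is saturated by computing the scheme-theoretic intersection of the single new line with the old configuration (it is $(x_0,y_0,\dots,y_{n-1},x_1^s y_n^r)$) and exhibiting an explicit product $FG$ that restricts to $x_1^s y_n^r$. That one-point-at-a-time device is what keeps the bookkeeping finite, and your version has two genuine gaps.

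First, your induction on $t$ does not propagate its hypotheses. You assert $N_1(X')=N_1(X)$, but once the assumption $Y_i\cap Y_j\subseteq B_Y$ is dropped this fails: a point $Q\in Y_1\cap\dots\cap Y_{t-1}$ with $Q\notin Y_t$ lies in $A_X$ but in $B_{X'}$, so $N_1(X')>N_1(X)$ and $N_0(X')<N_0(X)$, and $\mathcal{D}(X')$ changes accordingly; there is no reason for $N_1(X')\in\mathcal{D}(X')$ to hold, and hence no reason for $X'$ to be ACM (the ACM property genuinely fluctuates under removal and addition of points, cf.\ Example \ref{ex:6pt Not ACM}). Since your Mayer--Vietoris step requires $X'$ to be ACM, the induction can break at its first application. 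Second, even granting an ACM $X'$, the saturation argument is incomplete exactly where you flag it: the scheme $X'\cap X_t$ is fattened in the $x$-direction with length $t-1$ over points of $B_Y$ but only length $r_Q<t-1$ over a collision point $Q\in C$, so $I_{X'\cap X_t}$ has mixed generators of $x$-degree $r_Q$ that lie neither in $(x_0,x_1)^{t-1}$ nor in the pure-$y$ part. Your dimension count in the critical degree $d$ (which is correct, and is a clean linear-algebra reformulation of the paper's generator count) says nothing about these, and the closing claim that ``the whole question collapses back onto the critical-degree count'' is not justified; knowing that the obstruction module lives in low $x$-degree does not produce the required elements of $I_{X'}+I_{X_t}$. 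The paper's incremental construction is precisely what reduces all of this to the single monomial $x_1^s y_n^r$ on one line, which can then be written down explicitly.
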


\begin{proof}
We build off Theorem \ref{no intersection}. We know that the result is true when $Y_i\cap Y_j\subseteq B_Y$ for any $i\neq j$, so it is enough to show that adding points one at a time in such a way that $\pi_1(X)$ remains unchanged (equivalently, in this case, such that $\pi_1(A_X)$ remains unchanged) does not affect the ACM property. 

Let $X' \subseteq \mathbb P^1\times\mathbb P^n$ be a finite set such that the points in $\pi_1(A_{X'})$ and in $ \pi_{1}(B_{X'})$ are generic in $\mathbb P^n$, as defined above.  Let $P \in (\mathbb P^1\times\mathbb P^n) \backslash X'$ and for convenience set $P = P_0 \times Q_0$ with $P_0 \in \mathbb P^1$ and $Q_0 \in \mathbb P^n$. Assume that  $\pi_1(P) \in \pi_1(A_{X'})$ and $\pi_2(P) \in  \pi_2(B_{X'})$. (The former says that at least one point of $A_{X'}$ is of the form $P_i \times Q_0$, $i \neq 0$, and the latter says that at least one point of $B_{X'}$ is of the form $P_0 \times Q_i$, $i \neq 0$. In terms of Figure \ref{def AX and BX}, we are allowing ourselves to insert points at the open circles.) Let $X = X' \cup P$. Notice that adding  $P$ to $X'$ in this way gives us $N_0(X) = N_0(X')$ and $N_1(X) = N_1(X')$.

Assume that $N_1(X) = N_1(X') \in \mathcal D(X) = \mathcal D(X')$.  Assume that $X'$ is ACM. We claim that that $X$ is ACM. Then the result will follow from Theorem \ref{no intersection} since we begin with an ACM set of points and keep adding points in a way that preserves the ACM property.

Viewed in $\mathbb P^{n+2}$, we may view $X'$ as a union of lines, so the ACM property is equivalent to the vanishing of $H^1(\mathcal I_{X'}(t))$ for all $t$.  From the long exact sequence associated to the sheafification of the exact sequence
\[
0 \rightarrow I_X \rightarrow I_{X'} \oplus I_P \rightarrow I_{X'} + I_P \rightarrow 0
\]
and the ACM property for $X'$ and for $P$, we see that $X$ is ACM if and only if $I_{X'}+I_P$ is saturated.

Assume without loss of generality that $I_P = (x_0,y_0,\dots,y_{n-1})$.   The ideal $I_{X'} + I_P$ defines the scheme-theoretic intersection of the line associated to $P$ with the union of lines associated to $X'$. This is supported on two points, as follows. 

\begin{itemize}

\item[(a)] If $Q \in X'$ satisfies $\pi_2(Q) = \pi_ 2(P)  $ (i.e. $Q = P_0 \times Q_i$ for some $i \neq 0$), then without loss of generality we can assume that $I_Q = (x_0, \ell_1,\dots, \ell_n)$ where the $\ell_i$ are general linear forms in $k[y_0,\dots,y_n]$. Thus the line in $\mathbb P^{n+2}$ corresponding to $Q$   meets the line corresponding to  $P$ at the point defined by $(x_0,y_0,\dots,y_n)$ in $\mathbb P^{n+2}$, and $I_{X'} + I_P$ defines a scheme supported in part at this point (since by assumption such $Q \in X'$ exist).

\medskip

\item[(b)] If $Q \in X'$ satisfies $\pi_1(Q) = \pi_1(P)$ (i.e. $Q = P_i \times Q_0$ for some $i \neq 0$), without loss of generality assume that $I_Q = (x_1,y_0,\dots, y_{n-1})$. Thus the line in $\mathbb P^{n+2}$ corresponding to  $Q$  meets the line corresponding to  $P$ at the point defined by $(x_0,x_1,y_0,\dots,y_{n-1})$ in $\mathbb P^{n+2}$, and $I_{X'} + I_P$ defines a scheme supported in part at this point (since by assumption such $Q \in X'$ exist). 

\end{itemize}

\noindent No other point of $X'$ corresponds to a line that meets the line corresponding to $P$. Thus the scheme defined by $I_{X'} + I_P$ is supported at these two points.

We now determine $(I_{X'} + I_P)^{sat}$ in $k[x_0,x_1,y_0,\dots,y_n]$. Since it defines a subscheme of a line, clearly it will have the form $(x_0,y_0,\dots,y_{n-1},f)$, where $f \in k[x_1,y_n]$, and we just have to determine $f$. Since we have determined the two points where $f$ vanishes on our line, we also know that $f$ has the form $x_1^\alpha y_n^\beta$ and we only have to determine $\alpha$ and $\beta$.

Let $X_{(a)}$ be the set of points of type (a),  and let $X_{(b)}$ be the set of points of type (b) in $X'$. Let $r$ be the initial degree of $I_{\pi_1(X_{(a)})}$ in $k[y_0,\dots,y_n]$  and let $s = | X_{(b)} |$.

\medskip

\noindent \underline{Claim}: $(I_{X'} + I_P)^{sat} = (x_0,y_0,\dots,y_{n-1}, x_1^s y_n^r)$.

\medskip

Note that $\pi_1(X_{(a)}) \subset \mathbb P^n$ has a homogeneous ideal $J := I_{\pi_1(X_{(a)})} \subset k[y_0,\dots,y_n]$ and is ACM. Hence $I_{X_{(a)}} = (x_0,J)$ in $k[x_0,x_1,y_0,\dots,y_n]$. It follows that 
\[
I_P + I_{X_{(a)}} = (x_0,y_0,\dots,y_{n-1},J) = (x_0,y_0,\dots,y_{n-1},y_n^r)
\]
since the points of $\pi_1(A_{X'})$ and $\pi_1(B_{X'})$ are generic. So the scheme of intersection of the line defined by $P$ with the union of lines corresponding to points of $X_{(a)}$ is defined by the ideal $(x_0,y_0,\dots,y_{n-1},y_n^r)$.

Notice that $\pi_2(X_{(b)}) \subset \mathbb P^1$ is defined by a product of distinct linear forms $G = m_1 \dots m_s$, not divisible by $x_0$, in $k[x_0,x_1]$ and that $X_{(b)}$ is ACM with defining ideal $(G, y_0,\dots,y_{n-1})$. Thus the scheme defined by $I_{X_{(b)}} + I_P$, which is the scheme-theoretic intersection of the line defined by $P$ with the union of lines corresponding to points of $X_{(b)}$ is defined by the ideal $(x_0,x_1^s,y_0,\dots,y_{n-1})$. 

Since $(I_{X'} + I_P)^{sat}$ is the saturated ideal corresponding to the union of these two complete intersection schemes, the claim follows. 

Finally, we have to show that $I_{X'} + I_P = (x_0,y_0,\dots,y_{n-1}, x_1^s y_n^r)$. Let us write $X_{(a)} = A_{(a)} \cup B_{(a)}$, separating the points of $X_{(a)} \cap A_X$ from those of $X_{(a)} \cap B_X$. Let $Y := \pi_1 (X') \backslash \pi_1 (P) \subset \mathbb P^n$. 
The key observation is that the following three are equal:

\begin{itemize}

\item the initial degree of $I_{\pi_1(B_{(a)})}$ in $k[y_0,\dots,y_n]$ (note $B_{(a)} = B_X$);

\item the initial degree of $I_{\pi_1(X_{(a)})}$;

\item the initial degree of $I_Y$. 

\end{itemize}

\noindent This observation is thanks to the numerical assumption $N_1(X') \in \mathcal D(X')$, since $| A_{(a)}| \leq N_0(X') -1$ and the points are generic. Then if $G \in k[x_0,x_1]$ is the generator of $\pi_2(X_{(b)})$ (a product of $s$ distinct linear forms not divisible by $x_0$) and $F$ is a minimal  generator of $I_Y$ of least degree (namely $r$) then clearly $FG \in I_{X'}$ restricts to $x_1^s y_n^r$ modulo $I_P$ and so $I_{X'} + I_P$ is saturated, and hence $X$ is ACM.
\end{proof}

\begin{conjecture}
The converse to Theorem \ref{yes intersection} is also true: if $X = A_X \cup B_X$ is ACM, satisfying the stated assumptions, then $N_1(X) \in \mathcal D(X)$.
\end{conjecture}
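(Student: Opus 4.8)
The plan is to reduce the converse to Theorem~\ref{no intersection} by a stepwise removal of points. Every $X$ satisfying the hypotheses of Theorem~\ref{yes intersection} is obtained from a \emph{stacked model} $X_0$ --- a configuration with $Y_i \cap Y_j \subseteq B_Y$ for all $i \neq j$ and with $N_0(X_0)=N_0(X)$, $N_1(X_0)=N_1(X)$ --- by successively adjoining the ``open circle'' points $P = P_0 \times Q_0$ of the proof of Theorem~\ref{yes intersection}, namely those with $\pi_1(P) \in \pi_1(A_{X'})$ and $\pi_2(P) \in \pi_2(B_{X'})$. Each such move leaves $N_0$ and $N_1$ unchanged, and since $\mathcal D(X)$ depends only on $N_0$ and $n$ we have $\mathcal D(X)=\mathcal D(X_0)$. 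By Theorem~\ref{no intersection}, $X_0$ is ACM if and only if $N_1(X_0)\in\mathcal D(X_0)$. Hence the whole statement follows once one proves the following \emph{Removal Lemma}: if $X=X'\cup P$ is ACM and $P$ is an open circle point with respect to $X'$, then $X'$ is ACM. Granting this, peeling $X$ down to its stacked model $X_0$ one point at a time keeps each stage ACM, so $X_0$ is ACM; if $N_1(X)\notin\mathcal D(X)$ this contradicts Theorem~\ref{no intersection}, and we conclude $N_1(X)\in\mathcal D(X)$.

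To attack the Removal Lemma I would again pass to $\mathbb P^{n+2}$, regard $X$, $X'$, $P$ as unions of lines, and use the Mayer--Vietoris sequence
\[
0 \to \mathcal I_X \to \mathcal I_{X'} \oplus \mathcal I_P \to \mathcal G \to 0, \qquad \mathcal G := \mathcal I_{X'}+\mathcal I_P.
\]
Because $P$ is a single line and $X$ is ACM, the long exact cohomology sequence yields an injection $H^1(\mathcal I_{X'}(t)) \hookrightarrow H^1(\mathcal G(t))$ for every $t$. The sheaf $\mathcal G$ is completely explicit: the computation in the proof of Theorem~\ref{yes intersection} shows that $(I_{X'}+I_P)^{sat}=(x_0,y_0,\dots,y_{n-1},x_1^s y_n^r)$, a length $r+s$ subscheme of the line $P$, so $H^1(\mathcal G(t))$ vanishes for $t \geq r+s-1$ and is supported entirely in the low degrees $t \leq r+s-2$. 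The plan is then to locate the potential ACM defect of $X'$: by the generator count of Theorem~\ref{no intersection}, any failure of $X'$ to be ACM must appear in the critical degree $d = N_0(X)+i-n+1$ attached to the minimal generators of $I_W$, the cone over $B_Y$. If one can show $d \geq r+s-1$, then $H^1(\mathcal G(d))=0$ forces $H^1(\mathcal I_{X'}(d))=0$, so the only possible defect degree of $X'$ is killed and $X'$ is ACM.

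The main obstacle is exactly this last comparison: proving that a single adjoined line is ``too small,'' or lives in too low a range of degrees, to absorb the critical-degree obstruction of $X'$. The injection $H^1(\mathcal I_{X'}(t)) \hookrightarrow H^1(\mathcal G(t))$ alone is not enough, since it is consistent with both groups being nonzero in the same low degree; one must genuinely use the genericity of the points of $\pi_1(A_X)$ and $B_Y$ to show that the defect degree $d$ exceeds the range $t \leq r+s-2$ where $\mathcal G$ carries cohomology, equivalently that the missing pure-$y$ generators of $I_W$ cannot be manufactured from the mixed, $x_1$-divisible generator of $\mathcal G$. I expect that pinning this down will require analyzing the graded module structure of $H^1(\mathcal I_{X'})$ --- in particular where its socle sits --- rather than mere dimension counts. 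An alternative that avoids the Removal Lemma is to prove the contrapositive directly, computing $\dim (I_X)_d$ in the critical degree and showing that when $N_1(X)\notin\mathcal D(X)$ it falls short of the value dictated by the ACM Hilbert function, the extra intersection points only cutting down the space of independent forms; but this route runs into the same core difficulty of certifying the degree $d$ against the contributions coming from the non-stacked intersections, and here one must also handle the induction on the number of level sets carefully, since removing a level set can decrease $N_0$, using $\mathcal D(X)\subseteq \mathcal D(X')$ whenever $N_0(X)\geq N_0(X')$ exactly as in the proof of Theorem~\ref{no intersection}.
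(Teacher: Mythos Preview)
This statement is labeled a \emph{conjecture} in the paper; the authors do not supply a proof, so there is no argument in the paper to compare your proposal against.

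Your proposal is explicitly a plan rather than a proof, and you correctly flag the Removal Lemma as the unproven step. There is, however, a structural point worth making: given Theorems~\ref{no intersection} and~\ref{yes intersection}, the Removal Lemma is not a genuine reduction but is essentially \emph{equivalent} to the conjecture. If the conjecture holds and $X=X'\cup P$ is ACM, then $N_1(X)\in\mathcal D(X)$; since removing an open-circle point leaves $N_0$ and $N_1$ unchanged, $N_1(X')\in\mathcal D(X')$, and Theorem~\ref{yes intersection} applied to $X'$ already yields that $X'$ is ACM. So the peeling strategy repackages the difficulty rather than dissolving it: proving the Removal Lemma is exactly as hard as proving the conjecture.

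On the cohomological side, your injection $H^1(\mathcal I_{X'}(t))\hookrightarrow H^1(\mathcal G(t))$ is valid, and the computation of $(I_{X'}+I_P)^{sat}$ from the proof of Theorem~\ref{yes intersection} does not rely on $X'$ being ACM, so you may use it. But the degree comparison you need is a real obstruction. The identification of the ``critical degree'' $d=N_0(X)+i-n+1$ in Theorem~\ref{no intersection} used the hypothesis $N_1\in\mathcal D$ to locate the defect; once you drop that hypothesis you no longer know the defect is concentrated in a single degree, and in any case $r$ (the initial degree of $I_{\pi_1(X_{(a)})}$) is governed by $N_1(X)$ and can be of the same order as $d$, so there is no a~priori reason for $d\ge r+s-1$. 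Your alternative direct Hilbert-function count runs into the same wall for the same reason. In short, both routes you sketch lead back to the open content of the conjecture itself.
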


\section{Acknowledgements} The first author thanks University of Notre Dame for the hospitality and Elena Guardo for useful discussions. This work was done while the first author was partially supported by the National Group for Algebraic and Geometrical Structures and their Applications (GNSAGA-INdAM) and the second author was partially supported by a Simons Foundation grant  (\#309556).


\end{document}